\newtheorem{theorem}{Theorem}
\newtheorem{problem}{Open Problem}%
\newtheorem{observation}{Observation}%
\newtheorem{corollary}{Corollary}%
\newtheorem{lemma}{Lemma}%
\begin{document}

\title[Local (Outer) Multiset Dimensions of Graphs]{Local (Outer) Multiset Dimensions of Graphs}


\author*[1,4]{\fnm{Rinovia} \sur{Simanjuntak}}\email{rino@itb.ac.id}

\author[2]{\fnm{M. Ali} \sur{Hasan}}\email{malihasan1996@gmail.com}

\author[2,3]{\fnm{Muhung} \sur{Anggarawan}}\email{muhung.anggarawan@digitalbdg.ac.id}

\affil[1]{\orgdiv{Combinatorial Mathematics Research Group }, \orgname{Faculty of Mathematics and Natural Sciences, Institut Teknologi Bandung}, \orgaddress{\street{Jalan Ganesa 10}, \city{Bandung}, \postcode{40132}, \state{Jawa Barat}, \country{Indonesia}}}

\affil[2]{\orgdiv{Master's Program in Mathematics}, \orgname{Faculty of Mathematics and Natural Sciences, Institut Teknologi Bandung}, \orgaddress{\street{Jalan Ganesa 10}, \city{Bandung}, \postcode{40132}, \state{Jawa Barat}, \country{Indonesia}}}

\affil[3]{\orgname{Akademi Digital Bandung}, \orgaddress{\street{Jalan Ranca Mekar Blok Lio}, \city{Bandung}, \postcode{40292}, \state{Jawa Barat}, 
\country{Indonesia}}}

\affil[4]{\orgdiv{Center for Research Collaboration in Graph Theory and Combinatorics}, 
\country{Indonesia}}


\abstract{Let $G$ be a finite, connected, undirected, and simple graph and $W$ be a set of vertices in $G$. A representation multiset of a vertex $u$ in $V(G)$ with respect to $W$ is defined as the multiset of distances between $u$ and the vertices in $W$. If every two adjacent vertices in $V(G)$ have a distinct multiset representation, the set $W$ is called a local multiset resolving set of $G$. If $G$ has a local multiset resolving set, then this set with the smallest cardinality is called the local multiset basis, and its cardinality is the local multiset dimension of $G$. Otherwise, $G$ is said to have an infinite local multiset dimension.\\ 

On the other hand, if every two adjacent vertices in $V(G) \backslash W$ have a distinct representation multiset, the set $W$ is called a local outer multiset resolving set of $G$. Such a set with the smallest cardinality is called the local outer multiset basis, and its cardinality is the local outer multiset dimension of $G$. Unlike the local multiset dimension, every graph has a finite local outer multiset dimension.\\

This paper presents some basic properties of local (outer) multiset dimensions, including lower bounds for the dimensions and a necessary condition for a finite local multiset dimension. We also determine local (outer) multiset dimensions for some graphs of small diameter. }

\keywords{multiset dimension, outer multiset dimension, local multiset dimension, local outer multiset dimension}



\maketitle

\section{Introduction }
\label{Intro}

In recent decades, there has been increased interest in studying ways to identify the vertices of a given graph uniquely. One of the best-known vertex identification methods is the metric dimension introduced by Slater \cite{Sl75} and Harary and Melter \cite{HM76} in the 1970s.\\

Throughout this paper, let $G$ be a finite, connected, undirected, and simple graph, with the vertex set $V(G)$ and the edge set $E(G)$. 
Let $W=\{w_1, \ldots, w_k\}$ be an ordered set of vertices in $G$ and let $u$ be a vertex of $G$. A {\em representation of $u$ with respect to $W$}, is a vector $r(u|W)=(d(u,w_1), \ldots, d(u,w_k))$, where $d(u,w)=d_G(u,w)$ is the distance between the vertices $u$ and $w$ in $G$. If for every pair of distinct vertices $u,v$ in $V(G)$, $r(u|W) \neq r(v|W)$, then $W$ is called {\em a resolving set for $G$}. A resolving set with the smallest cardinality is called a {\em basis}, and its cardinality is the {\em dimension of $G$}, denoted by $dim(G)$. However, a more common problem in graph theory concerns distinguishing every two neighbors rather than distinguishing all the vertices of $G$. Thus, if for every pair of adjacent vertices $u,v$ in $V(G)$, $r(u|W) \neq r(v|W)$, then $W$ is called {\em a local resolving set for $G$}. A local resolving set with the smallest cardinality is called a {\em local basis}, and its cardinality is the {\em local dimension of $G$}, denoted by $ldim(G)$. It is obvious that every basis of a graph is also its local basis, and so $1 \leq ldim(G) \leq dim(G) \leq n-1$ \cite{OPZ10}.\\

If we utilize a set of vertices $W=\{w_1,\ldots,w_k\}$, instead of an ordered set, then the representation of a vertex will be a multiset instead of a vector. Let $u$ be a vertex of $G$. A {\em representation multiset of $u$ with respect to $W$}, denoted by $m(u|W)$, is a multiset of distances between $u$ and all vertices in $W$. If for every pair of distinct vertices $u,v$ in $V(G)$, $m(u|W)\neq m(v|W)$, then $W$ is called {\em a multiset resolving set for $G$}. If $G$ has a multiset resolving set, then this set with the smallest cardinality is a {\em multiset basis}, and its cardinality is the {\em multiset dimension of $G$}, denoted by $md(G)$; otherwise, $G$ is said to have an infinite multiset dimension \cite{SVM17}. Every multiset basis of a graph is its basis, so $dim(G) \leq md(G)$.  The following theorem provides two necessary conditions for a graph to admit an infinite multiset dimension.\\

\begin{theorem} \cite{SVM17} \label{infmd}
    \begin{enumerate}
        \item If diameter $G$ is at most two, then $G$ has an infinite multiset dimension.
        \item If $G$ contains three vertices with the same open neighbourhood, then $G$ has an infinite multiset dimension.
    \end{enumerate}
\end{theorem}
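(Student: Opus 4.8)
The plan is to prove both parts by contradiction: assume $W=\{w_1,\dots,w_k\}$ is a finite multiset resolving set and exploit the requirement that the $n$ representations $m(u|W)$, $u\in V(G)$, be pairwise distinct.

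\textbf{(1) Diameter at most two.} The idea is that it already suffices to look at the representations of the vertices of $W$ themselves. For $w_i\in W$ the multiset $m(w_i|W)$ contains $d(w_i,w_i)=0$ exactly once, and every other entry $d(w_i,w_j)$ with $j\ne i$ equals $1$ or $2$, since $w_i\ne w_j$ and $\mathrm{diam}(G)\le 2$. Hence $m(w_i|W)$ is determined by the single integer $a_i:=\bigl|\{\,j\ne i:\ d(w_i,w_j)=1\,\}\bigr|\in\{0,1,\dots,k-1\}$, the number of neighbours of $w_i$ inside $W$. As $W$ resolves, the $a_i$ are pairwise distinct, so $\{a_1,\dots,a_k\}=\{0,1,\dots,k-1\}$. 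If $k\ge 2$, pick $p,q$ with $a_p=0$ and $a_q=k-1$: then $a_p=0$ forces $d(w_p,w_q)=2$ while $a_q=k-1$ forces $d(w_q,w_p)=1$, a contradiction. Thus $k\le 1$, and for $k=1$ the representations are singletons drawn from $\{\{0\},\{1\},\{2\}\}$, forcing $n\le 3$. Hence $md(G)=\infty$ whenever $n\ge 4$; the few connected graphs of diameter at most $2$ on at most three vertices are checked directly ($K_3$ has $md=\infty$, and the tiny paths are the familiar exceptions, having $md=1$).

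\textbf{(2) Three vertices with a common open neighbourhood.} Suppose $N(u)=N(v)=N(x)$; these vertices are pairwise non-adjacent, and since $G$ is connected on $\ge 3$ vertices $N(u)\ne\emptyset$, so $d(u,v)=d(v,x)=d(u,x)=2$. The key lemma is that $d(w,u)=d(w,v)=d(w,x)$ for \emph{every} vertex $w$. Indeed, for $w\in\{u,v,x\}$ this is the previous sentence; if $w$ is adjacent to one of them it is adjacent to all three, giving distance $1$ to each; and otherwise a shortest path from $w$ to $u$ enters $u$ through some $y\in N(u)=N(v)=N(x)$, so the same $y$ yields paths from $w$ to $v$ and from $w$ to $x$ of equal length, whence $d(w,v),d(w,x)\le d(w,u)$, and symmetry gives equality. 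Now take any $W$. By pigeonhole at least two of $u,v,x$ lie on the same side of $W$. If two, say $v$ and $x$, lie outside $W$, the lemma gives $d(w,v)=d(w,x)$ for all $w\in W$, so $m(v|W)=m(x|W)$. If two, say $u$ and $v$, lie inside $W$, split $W=\{u,v\}\cup(W\setminus\{u,v\})$: over $\{u,v\}$ the distances from $u$ are $\{d(u,u),d(u,v)\}=\{0,2\}$, and those from $v$ are $\{d(v,u),d(v,v)\}=\{0,2\}$ as well, while over $W\setminus\{u,v\}$ each $w$ contributes $d(w,u)=d(w,v)$ by the lemma; hence $m(u|W)=m(v|W)$. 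In both cases $W$ is not resolving, so $md(G)=\infty$.

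\textbf{Where the difficulty lies.} In Part (1) the single genuine idea is to ignore general vertices and pigeonhole the ``number of $W$-neighbours'' $a_i$ over the $k$ vertices of $W$; the rest, including the small-graph bookkeeping, is routine. In Part (2) the substance is the distance lemma together with the ``twins inside $W$'' case: there one must notice that the entries $0$ and $2$ merely swap roles between $m(u|W)$ and $m(v|W)$, contributing the same sub-multiset $\{0,2\}$, so the remaining entries---equal by the lemma---cannot separate $u$ from $v$. I expect that case to be the only spot where more than a one-line observation is required.
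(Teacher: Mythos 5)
Your proposal is correct. Note that the paper states this result without proof (it is quoted from \cite{SVM17}), so there is nothing internal to compare against; your part (2) is exactly the standard argument — the three vertices are pairwise at distance $2$ and equidistant from every other vertex, so by pigeonhole two of them lie on the same side of $W$ and receive equal representation multisets — which is also the same mechanism the paper itself uses later for $K$-end vertices in Theorem \ref{K-end}. Your part (1), restricting attention to the representations of the vertices of $W$ and pigeonholing the neighbour-counts $a_i$ to force both $a_p=0$ and $a_q=k-1$, is a clean and complete route to $k\leq 1$ and hence $n\leq 3$. Two small remarks. First, the statement as transcribed in the paper is literally false for $P_2$ and $P_3$ (diameter at most $2$, multiset dimension $1$); the precise formulation in \cite{SVM17} excludes paths, and your proof correctly surfaces exactly these exceptions, so this is a defect of the quoted statement, not of your argument. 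Second, your ``key lemma'' in part (2) is overstated: $d(w,u)=d(w,v)=d(w,x)$ fails when $w$ is one of $u,v,x$ (distance $0$ versus $2$); however, in both cases of your pigeonhole you only ever need the distances from $w$ to the two vertices being compared, and when $w$ is the third vertex of the triple both of these equal $2$, so the argument goes through unchanged — just state the lemma for $w\notin\{u,v,x\}$ and invoke the pairwise distance $2$ separately.
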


To avoid an infinite multiset dimension, Gil-Pons \textit{et. al.} \cite{GRTY19} introduced the concept of an outer multiset dimension. Here, only vertices outside a multiset resolving set need to have distinct representation multiset. Formally, if for every pair of distinct vertices $u,v$ in $V(G)\backslash W$, $m(u|W)\neq m(v|W)$, then $W$ is called an {\em outer multiset resolving set for $G$}. This set with the smallest cardinality is an {\em outer multiset basis}, and its cardinality is the {\em outer multiset dimension of $G$}, denoted by $dim_{ms}(G)$. It has been shown that every graph has a finite outer multiset dimension. It is obvious that a multiset basis of a graph is also its outer multiset basis, and an outer multiset basis of a graph is also its basis. Thus, $dim(G) \leq dim_{ms}(G) \leq md(G)$. The following theorem provides a natural upper bound for the outer multiset dimension of a graph and the necessary and sufficient condition for a graph admitting the said upper bound.\\

\begin{theorem} \cite{GRTY19, KKY23} \label{dmsn-1}
If the order of $G$ is $n$, then $dim_{ms}(G) \leq n-1$. Moreover, $dim_{ms}(G) = n-1$ if and only if $G$ is a regular graph with diameter at most 2.
\end{theorem}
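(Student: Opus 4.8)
The plan is to separate the bound $dim_{ms}(G)\le n-1$ from the equality case. The bound is immediate: for any vertex $u$ the set $W=V(G)\setminus\{u\}$ has only one vertex outside it, so the condition defining an outer multiset resolving set holds vacuously and $dim_{ms}(G)\le|W|=n-1$. For the equality case I would first isolate a bookkeeping observation and reuse it everywhere. For a vertex $x$ write $n_j(x)=|\{y\in V(G):d(x,y)=j\}|$ for its distance distribution. If $S\subseteq V(G)$ contains $x$, then $m(x|V(G)\setminus S)$ is obtained from the multiset $\{d(x,y):y\ne x\}$ — which has $n_j(x)$ copies of each $j\ge 1$ — by deleting one copy of $d(x,s)$ for each $s\in S\setminus\{x\}$. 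In particular, for distinct $u,v$ with $d(u,v)=t$, we get $m(u|V(G)\setminus\{u,v\})=m(v|V(G)\setminus\{u,v\})$ exactly when $n_j(u)=n_j(v)$ for all $j$ (both sides just delete one copy of $t$). Hence if two vertices have different distance distributions then $W=V(G)\setminus\{u,v\}$ is an outer multiset resolving set and $dim_{ms}(G)\le n-2$; conversely, $dim_{ms}(G)=n-1$ forces all vertices to share a single distance distribution $(n_0,n_1,n_2,\dots)$, so $n_1=\deg(x)$ is constant and $G$ is regular.

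For the implication from ``$G$ is $r$-regular with diameter at most two'' to ``$dim_{ms}(G)=n-1$'', I would show no outer multiset resolving set has fewer than $n-1$ vertices. Suppose $W$ is such a set and $S=V(G)\setminus W$ has at least two vertices. For $u\in S$ every distance $d(u,w)$, $w\in W$, equals $1$ or $2$, so $m(u|W)$ consists of $|N(u)\cap W|=r-\deg_{G[S]}(u)$ copies of $1$ and the rest copies of $2$; thus $m(u|W)$ depends only on $\deg_{G[S]}(u)$. Since $G[S]$ is a graph on at least two vertices, two of its vertices have the same degree in $G[S]$ (the degrees lie in $\{0,\dots,|S|-1\}$, and $0$ and $|S|-1$ cannot both occur), and these two distinct vertices of $S=V(G)\setminus W$ then have equal representation multisets with respect to $W$, contradicting that $W$ is outer multiset resolving. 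Hence $|W|\ge n-1$, and with the general bound $dim_{ms}(G)=n-1$.

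It remains to show $dim_{ms}(G)=n-1$ forces the diameter to be at most two. By the first paragraph we may assume $G$ has a common distance distribution $(n_0,n_1,n_2,\dots)$, and we argue by contradiction. If the diameter is at least $3$, choose a geodesic $x_0x_1x_2x_3$ (the first four vertices of a shortest path between two vertices at distance at least $3$), so that $d(x_i,x_j)=|i-j|$ for $i,j\le 3$, and set $W=V(G)\setminus\{x_0,x_1,x_3\}$, of size $n-3$. Applying the bookkeeping observation: in $m(x_3|W)$ the value $1$ occurs $n_1$ times, since $x_3$ is adjacent to neither $x_0$ nor $x_1$, while in $m(x_0|W)$ and $m(x_1|W)$ it occurs $n_1-1$ times; and the value $2$ occurs $n_2$ times in $m(x_0|W)$ but only $n_2-1$ times in $m(x_1|W)$, because $d(x_1,x_3)=2$ whereas $d(x_0,x_3)=3$ (with $n_2\ge 1$ since the diameter exceeds $2$). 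So $m(x_0|W)$, $m(x_1|W)$ and $m(x_3|W)$ are pairwise distinct, $W$ is outer multiset resolving, and $dim_{ms}(G)\le n-3$, a contradiction. This completes the characterization.

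I expect the regular, diameter-at-least-$3$ case to be the one real obstacle, and that is exactly why the distance-distribution reformulation is worth proving first: in such a graph no $(n-2)$-set works at all — every $(n-2)$-set is of the form $V(G)\setminus\{u,v\}$, and all vertices already have the same distance distribution — so one is forced to delete a carefully chosen triple, and it is precisely the equality $n_2(x_0)=n_2(x_1)$ guaranteed by distance-degree-regularity that makes the multiplicity of $2$ in $m(x_0|W)$ and $m(x_1|W)$ come out as $n_2$ versus $n_2-1$ rather than quantities that might coincide. Everything else — the trivial bound, the regularity of $G$ (the easy consequence of the reformulation, which also covers the non-regular direction), and the regular-diameter-$\le 2$ implication — is routine once the reformulation is in hand.
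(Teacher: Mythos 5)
This theorem is quoted in the paper from the literature (\cite{GRTY19, KKY23}) and no proof of it appears in the paper itself, so there is no internal argument to compare against; I can only assess your proposal on its own terms, and it checks out. The trivial bound via $W=V(G)\setminus\{u\}$ is fine. Your bookkeeping lemma is correct: for $W=V(G)\setminus\{u,v\}$ with $d(u,v)=t$, both representations are the full distance multisets with one copy of $t$ deleted, so they agree exactly when $u$ and $v$ have the same distance distribution; this gives both that $dim_{ms}(G)=n-1$ forces a common distance distribution (hence regularity, via $n_1$) and, contrapositively, the $n-2$ bound otherwise. The regular, diameter-at-most-two direction is also sound: with $S=V(G)\setminus W$, $|S|\geq 2$, each $m(u|W)$ is determined by $\deg_{G[S]}(u)$ (it has $r-\deg_{G[S]}(u)$ ones and the rest twos), and the pigeonhole fact that any graph on at least two vertices has two vertices of equal degree kills every such $W$. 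For the remaining direction, your geodesic triple $\{x_0,x_1,x_3\}$ works: using the common distribution, the multiplicity of $1$ separates $x_3$ from $x_0$ and $x_1$ ($n_1$ versus $n_1-1$), and the multiplicity of $2$ separates $x_0$ from $x_1$ ($n_2$ versus $n_2-1$), so $V(G)\setminus\{x_0,x_1,x_3\}$ is outer multiset resolving of size $n-3$, contradicting $dim_{ms}(G)=n-1$. Your closing remark is also apt: since all $(n-2)$-sets fail automatically in the distance-degree-regular situation, passing to a three-vertex deletion is genuinely necessary, and the common distribution is what makes the multiplicity comparison clean. I see no gap.
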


\vspace{3mm}
In the case where only neighbors need to be distinguished, Alfarisi \textit{et. al.} \cite{ADKA19} introduced the notion of local multiset dimension. Formally, if for every two adjacent vertices $u$ and $v$ in $V(G)$, $m(u|W)\neq m(v|W)$, then $W$ is called a {\em local multiset resolving set} of $G$. If the graph $G$ has a local multiset resolving set, then this set with the smallest cardinality is called a {\em local multiset basis} and its cardinality is called the {\em local multiset dimension} of $G$, denoted by $lmd(G)$; otherwise, $G$ has an infinite local multiset dimension. Every local multiset basis of a graph induces its local basis, and every multiset basis of a graph induces its local multiset basis. This leads to the relationship $ldim(G) \leq lmd(G) \leq md(G)$.\\

It is then natural to consider the local version of the outer multiset dimension, as in the following. If for every pair of distinct vertices $u,v$ in $V(G)\backslash W$, $m(u|W)\neq m(v|W)$, then $W$ is called a {\em local outer multiset resolving set for $G$}. This set with the smallest cardinality is a {\em local outer multiset basis}, and its cardinality is the {\em local outer multiset dimension of $G$}, denoted by $ldim_{ms}(G)$. Since every outer multiset basis of a graph induces its local outer multiset basis, then $ldim_{ms}(G) \leq dim_{ms}$. Additionally, since every local multiset basis of a graph induces its local outer multiset basis, and every local outer multiset basis of a graph induces its local basis, then $ldim(G) \leq ldim_{ms}(G) \leq lmd(G)$. We summarize all the inequalities involving the dimensions in the following observation.\\

\begin{observation} \label{ineq}
    \begin{enumerate}
    \item $1 \leq ldim(G) \leq dim(G) \leq dim_{ms}(G) \leq md(G)$,
    \item $1 \leq ldim_{ms}(G) \leq dim_{ms}(G) \leq n-1$, and 
    \item $1 \leq ldim(G) \leq ldim_{ms}(G) \leq lmd(G) \leq md(G)$.
    \end{enumerate}
\end{observation}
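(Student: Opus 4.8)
The plan is to deduce all the inequalities in Observation~\ref{ineq} from two simple facts. First, if one ``resolving'' requirement is logically stronger than another, then every vertex set satisfying the stronger one also satisfies the weaker one, so the minimum cardinality over the more restrictive family is at least the minimum over the less restrictive family. Second, any outer multiset resolving set, read as an ordered tuple, is in fact a resolving set, and any local outer multiset resolving set is a local resolving set. The first item is pure bookkeeping; the second is the only place where a short argument is really needed.

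I would begin by listing the inclusions among the families of admissible sets. Straight from the definitions: every multiset resolving set is simultaneously an outer multiset resolving set, a local multiset resolving set, and (after fixing any order) a resolving set, the last because $r(u|W)=r(v|W)$ forces $m(u|W)=m(v|W)$; every local multiset resolving set is a local outer multiset resolving set; every resolving set is a local resolving set; and every outer multiset resolving set is a local outer multiset resolving set, since adjacent pairs in $V(G)\setminus W$ form a subfamily of all distinct pairs in $V(G)\setminus W$. Passing to minimum cardinalities yields $dim(G)\le md(G)$, $dim_{ms}(G)\le md(G)$, $lmd(G)\le md(G)$, $ldim(G)\le dim(G)$, $ldim(G)\le lmd(G)$, $ldim_{ms}(G)\le lmd(G)$, and $ldim_{ms}(G)\le dim_{ms}(G)$.

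It then remains to obtain $dim(G)\le dim_{ms}(G)$ and $ldim(G)\le ldim_{ms}(G)$, that is, to show that an (local) outer multiset resolving set $W$ becomes, once ordered, a (local) resolving set; the delicate point is that the vertices of $W$ are a priori unconstrained. Here I would invoke the zero-coordinate observation: in a simple connected graph $d(u,w)=0$ iff $u=w$, so for $u\in V(G)\setminus W$ every entry of $r(u|W)$ is positive, whereas for $w_i\in W$ the $i$-th entry of $r(w_i|W)$ equals $0$; moreover two distinct $w_i,w_j\in W$ are already separated at coordinate $i$, since $r(w_i|W)_i=0<d(w_j,w_i)=r(w_j|W)_i$. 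Hence, once the (outer) multiset condition holds on the pairs in $V(G)\setminus W$, no two distinct vertices of $G$ can share an ordered representation (one checks the three cases according to how the pair splits between $W$ and $V(G)\setminus W$), and restricting this reasoning to adjacent pairs upgrades a local outer multiset resolving set to a local resolving set.

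Finally I would dispatch the extreme values. Taking $W=V(G)\setminus\{v\}$ for any vertex $v$ makes $V(G)\setminus W$ a single vertex, so every such condition is vacuously satisfied; this gives $ldim_{ms}(G)\le n-1$ and shows $ldim_{ms}(G)$ is always finite, which together with Theorem~\ref{dmsn-1} covers $ldim_{ms}(G)\le dim_{ms}(G)\le n-1$, and also guarantees that $ldim_{ms}(G)\le lmd(G)\le md(G)$ remains valid (with the convention ``finite $\le\infty$'') even when $lmd(G)$ or $md(G)$ is infinite. For the lower bound $1$, a connected graph on at least two vertices has an edge, so the empty set fails to separate some adjacent pair, and hence none of the dimensions can equal $0$. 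I do not anticipate a genuine obstacle: the statement is organizational, and the only step that is not immediate is the zero-coordinate argument that accounts for the vertices lying inside $W$.
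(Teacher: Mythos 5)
Your proof is correct and follows essentially the same route as the paper, which establishes each inequality through the same chain of containments between the families of (local, outer, multiset) resolving sets, stated as remarks in the introduction. The one step the paper leaves implicit---that an outer multiset resolving set, once ordered, is a resolving set, and likewise in the local setting---is exactly the zero-coordinate argument you supply, so your write-up is if anything more complete than the source.
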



\vspace{3mm}
In this paper, we present some basic properties of the local (outer) multiset dimensions of a graph, which include a sufficient condition for a finite local multiset dimension (Section \ref{finite}) and some sharp bounds for the dimensions (Section \ref{bound}). In the last section (Section \ref{smalldiam}), we consider the local (outer) multiset dimension of graphs with diameter at most two.   

\section{A sufficient condition for a graph with finite local multiset dimension}
\label{finite}

For $r\geq 3$, a vertex $u$ in a clique $K_r$ of $G$ is called a {\em $K_r$-end vertex} in $G$ if the degree of $u$ in $G$, $deg_G(u)=r-1$.\\

\begin{theorem}\label{K-end}
If $G$ has a finite local multiset dimension, then every clique $K$ of $G$ contains at most two $K$-end vertices. Moreover, if a clique $K$ of $G$ contains two $K$-end vertices, then exactly one must be in every local resolving set of $G$.
\end{theorem}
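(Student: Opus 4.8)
The plan is to exploit that two $K$-end vertices of one clique are \emph{true twins} — vertices with the same closed neighbourhood — and that swapping two true twins is an automorphism of $G$. First I would record the elementary fact: if $u$ is a $K_r$-end vertex of a clique $K$ with $r\geq 3$, then $deg_G(u)=r-1$ together with $u$ being adjacent to the other $r-1$ vertices of $K$ forces $N_G(u)=K\setminus\{u\}$, hence $N_G[u]=K$. Consequently, if $u$ and $v$ are both $K$-end vertices of the same $K$, then $N_G[u]=N_G[v]=K$; in particular $u\sim v$, the transposition $\sigma$ interchanging $u$ and $v$ (and fixing every other vertex) is an automorphism of $G$ — the routine check that $N_G[u]=N_G[v]$ makes adjacency $\sigma$-invariant — and hence $d_G(u,w)=d_G(\sigma(u),\sigma(w))=d_G(v,w)$ for every $w\in V(G)\setminus\{u,v\}$.

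The crux is then the claim: for such a pair $u,v$ and any $W\subseteq V(G)$, we have $m(u|W)=m(v|W)$ unless exactly one of $u,v$ lies in $W$. Indeed, if $u,v\notin W$ then every entry of $m(u|W)$ equals the corresponding entry of $m(v|W)$ by the equidistance above, so the multisets coincide; and if $u,v\in W$ then $\sigma$ fixes $W$ setwise and preserves distances, so $m(u|W)=m(\sigma(u)|\sigma(W))=m(v|W)$. Since $u$ and $v$ are adjacent, every local multiset resolving set $W$ of $G$ must satisfy $m(u|W)\neq m(v|W)$, and therefore contains exactly one of $u,v$. This already settles the ``moreover'' part: a graph with finite local multiset dimension admits a local multiset resolving set, and by the claim every such set contains exactly one of the two $K$-end vertices.

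For the first assertion I would argue by contraposition. Suppose a clique $K$ contains three distinct $K$-end vertices $u,v,t$ (there is room for three since $r\geq 3$); they are pairwise adjacent true twins. Applying the claim to the pairs $\{u,v\}$, $\{u,t\}$ and $\{v,t\}$, any local multiset resolving set $W$ would contain exactly one vertex from each pair, so with $a,b,c\in\{0,1\}$ the indicators of $u\in W$, $v\in W$, $t\in W$ we get $a+b=a+c=b+c=1$ and hence $2(a+b+c)=3$, which is impossible. Thus no local multiset resolving set exists, i.e.\ $lmd(G)=\infty$, so a graph with finite local multiset dimension has at most two $K$-end vertices in each clique. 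The only slightly delicate point is the equidistance/automorphism step in the first paragraph; once it is in place, everything reduces to a parity count, so I do not anticipate a real obstacle.
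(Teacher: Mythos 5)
Your proof is correct and follows essentially the same route as the paper: the key fact in both is that two $K$-end vertices of a clique are equidistant from every other vertex (you justify this via the true-twin automorphism, the paper simply states it), after which the paper's pigeonhole step and your parity count $a+b=a+c=b+c=1$ are the same argument. No gaps.
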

\begin{proof}
Suppose that $K=K_r$ is a clique of $G$ containing three distinct $K$-end vertices, say $u,v,$ and $w$. It is clear that $d_G(x,u)=d_G(x,v)=d_G(x,w)$ for any $x\in V(G)\setminus\{u,v,w\}$. Take an arbitrary subset $W\subseteq V(G)$. By the pigeonhole principle, two of $u,v,$ and $w$ are either contained in $W$ or not.  
In both cases, the two vertices will have the same multiset representation, so $G$ has no local multiset resolving set. 

Let $u$ and $v$ be the only two distinct $K$-end vertices for some $K$ a clique of $G$ and $W$ be an arbitrary set of vertices in $G$. If $u$ and $v$ are both in $W$, then $m(u|W)=m(v|W)$; the same result also occurs when both $u$ and $v$ are not in $W$. Therefore, either $u$ or $v$ must be in $W$. 
\end{proof}

For the local outer multiset resolving set, the following condition involving $K$-end vertices holds.\\ 

\begin{theorem}\label{outK-end}
If a clique $K$ of $G$ contains more than two $K$-end vertices, then all of them, except one, belong to every local outer multiset resolving set of $G$.
\end{theorem}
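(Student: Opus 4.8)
The plan is to follow the strategy of Theorem~\ref{K-end}, exploiting the fact that the $K$-end vertices of a common clique are indistinguishable from outside the clique. Write $K=K_r$ and let $u_1,\dots,u_t$, with $t\ge 3$, be its $K$-end vertices. I would first record two elementary facts. Since each $u_i$ has degree $r-1$ and lies in $K$, its neighbourhood is exactly $V(K)\setminus\{u_i\}$; in particular the vertices $u_1,\dots,u_t$ are pairwise adjacent, so $d_G(u_i,u_j)=1$ whenever $i\neq j$. Secondly, for every vertex $x\in V(G)\setminus\{u_1,\dots,u_t\}$ and all indices $i,j$ one has $d_G(x,u_i)=d_G(x,u_j)$ — the same observation that appears in the proof of Theorem~\ref{K-end}. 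I would justify it as follows: take a shortest $x$--$u_j$ path and let $z$ be its penultimate vertex, so $z\in V(K)\setminus\{u_j\}$ and $d_G(x,z)=d_G(x,u_j)-1$; if $z=u_i$, this already gives $d_G(x,u_i)<d_G(x,u_j)$, while if $z\neq u_i$ then $z$ is adjacent to $u_i$ (both lie in the complete graph $K$), so $d_G(x,u_i)\le d_G(x,z)+1=d_G(x,u_j)$. Hence $d_G(x,u_i)\le d_G(x,u_j)$ in all cases, and by symmetry the two distances are equal.

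From here the theorem would follow by a short contradiction argument. I would take a local outer multiset resolving set $W$ of $G$ and suppose, for contradiction, that two of the $K$-end vertices, say $u_i$ and $u_j$, both lie in $V(G)\setminus W$. They are adjacent, so the defining property of $W$ requires $m(u_i|W)\neq m(u_j|W)$. However, for each $w\in W$ the distances $d_G(u_i,w)$ and $d_G(u_j,w)$ coincide: if $w$ is one of the remaining $K$-end vertices $u_k$ (necessarily $k\ne i,j$), then $d_G(u_i,w)=d_G(u_j,w)=1$ by the first fact; and if $w\notin\{u_1,\dots,u_t\}$, then $d_G(u_i,w)=d_G(u_j,w)$ by the second. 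Therefore $m(u_i|W)=m(u_j|W)$, a contradiction. Consequently at most one of $u_1,\dots,u_t$ can lie outside $W$, which is precisely the assertion.

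I do not expect a genuine obstacle here. The one point that needs a little attention — and the only respect in which the argument differs from the ``distinguish all vertices'' setting — is that $W$ may itself contain some, but not all, of the $K$-end vertices, which is exactly why the remark $d_G(u_i,u_j)=1$ is needed alongside the outside-distance equality. The real content is the distance lemma for $K$-end vertices, and that is essentially inherited from the proof of Theorem~\ref{K-end}.
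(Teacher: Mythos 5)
Your proposal is correct and follows essentially the same route as the paper: two $K$-end vertices outside $W$ are adjacent twins whose distances to every other vertex coincide, so they receive identical representation multisets. Your version is in fact slightly more careful than the paper's, which routes the distance equality through a non-$K$-end vertex of $K$ and does not explicitly handle resolving vertices that are themselves $K$-end vertices, whereas your two recorded facts cover both cases cleanly.
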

\begin{proof}
Let $W$ be an arbitrary set of vertices in $G$. Suppose that there exist two distinct $K$-end vertices $u$ and $v$ which are not in $W$. If $w$ is a vertex in $K$ that is not a $K$-end vertex, then $d(u,x) = d(w,x) + 1 = d(v,x)$, for every $x$ in $V(G) \backslash V(K)$. And so $m(u|W)=m(v|W)$.  
\end{proof}

Applying Theorem \ref{K-end} and \ref{outK-end}, we obtain the following local (outer) multiset dimension for vertex and edge amalgamation of complete graphs and corona product of an arbitrary graph and a complete graph. For a sequence of complete graphs $K_{n_1},K_{n_2},\ldots, K_{n_m}$ and a vertex $v_i$ in $K_{n_i}$, the \textit{vertex amalgamation} $Amal(K_{n_i},m)$ is the graph obtained from $K_{n_1},K_{n_2},\ldots, K_{n_m}$ by identifying $v_i$. Similarly, for an edge $e_i$ in $K_{n_i}$, the \textit{edge amalgamation} $EdgeAmal(K_{n_i},m)$ is the graph obtained from $K_{n_1},K_{n_2},\ldots, K_{n_m}$ by identifying the copies of $e_i$.\\ 

\begin{theorem} \label{amal}
Let $m\ge 2$ and $n_i \geq 1$, for every $1\leq i \leq m$.
\begin{enumerate}
    \item The $Amal(K_{n_i},m)$ has finite local multiset dimension if and only if $n_i \leq 3$ for every $1\leq i \leq m$. Moreover, if $m_3$ is the number of $K_3$s, then 
    \[lmd(Amal(K_{n_i},m))=
    \left\{
    \begin{array}{ll}
        1, & \text{if } m_3 = 0,\\
        2, & \text{if } m_3 = 1,\\
        m_3, & \text{if } m_3 \geq 2.
    \end{array}
    \right.\]    
    \item If $m_3$ is the number of $K_3$s and $m_{\geq 4}$ is the number of $K_n$s with $n\geq 4$, then 
    \[ldim_{ms}(Amal(K_{n_i},m))=
    \left\{
    \begin{array}{ll}
        1, & \text{if } m_3 = 0 \ \mathrm{and} \ m_{\geq 4} = 0,\\
        2, & \text{if } m_3 = 1 \ \mathrm{and} \ m_{\geq 4} = 0,\\
        \Sigma_{n_i \geq 3} (n_i-2), & \text{otherwise}.
    \end{array}
    \right.\]    
\end{enumerate}
\end{theorem}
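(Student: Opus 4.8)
The plan is to read off the structure of $G=Amal(K_{n_i},m)$ first. Write $v$ for the amalgamated vertex and $V_i$ for the $n_i-1$ vertices of the copy of $K_{n_i}$ other than $v$, so that $G-v$ is the disjoint union of cliques on the $V_i$ and the blocks of $G$ are exactly the sets $V_i\cup\{v\}$. Then $d_G(x,y)=1$ when $x,y$ lie in a common block (or one of them is $v$) and $d_G(x,y)=2$ otherwise, so $\mathrm{diam}(G)\le 2$. The decisive local facts are: every vertex of $V_i$ has $G$-degree $n_i-1$, hence is a $K_{n_i}$-end vertex; $v$ is itself a $K_{n_i}$-end vertex only in the degenerate case where a single block is nontrivial, which makes $G$ a single complete graph and is handled directly; and the only cliques of $G$ on at least three vertices are the blocks, since there is no edge between distinct $V_i$. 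Thus for $n_i\ge 3$ the block $K_{n_i}$ has exactly $n_i-1$ end vertices, and this is the input for applying Theorems \ref{K-end} and \ref{outK-end}.

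For part (1) I would first settle finiteness: if some $n_i\ge 4$ then $K_{n_i}$ has at least three end vertices, so $lmd(G)=\infty$ by Theorem \ref{K-end}; conversely, when every $n_i\le 3$ the constructions below give a finite local multiset resolving set. Let $m_3$ count the triangle blocks and label them $\{v,a_i,b_i\}$. Each triangle has exactly the two end vertices $a_i,b_i$, so Theorem \ref{K-end} forces any local multiset resolving set to contain exactly one of $a_i,b_i$; as these pairs are disjoint, $lmd(G)\ge m_3$, while $lmd(G)\ge 1$ always since $\emptyset$ resolves no edge. For the matching upper bounds: if $m_3=0$ then $G$ is a star and $W=\{v\}$ works, so $lmd=1$; if $m_3\ge 2$ then $W=\{a_1,\dots,a_{m_3}\}$ works, which I would check by evaluating the four possible multisets $m(v\mid W)=\{1^{m_3}\}$, $m(a_i\mid W)=\{0,2^{m_3-1}\}$, $m(b_i\mid W)=\{1,2^{m_3-1}\}$, $m(\ell\mid W)=\{2^{m_3}\}$ for a pendant $\ell$, and noting every edge joins vertices of two distinct types; this gives $lmd=m_3$. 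The case $m_3=1$ is finished by a short check that no one-element set works while $\{a_1,\ell\}$ does for any pendant $\ell$, giving $lmd=2$.

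For part (2) the lower bound again comes from end-vertex counting: for a block with $n_i\ge 4$, Theorem \ref{outK-end} forces all but one of its $n_i-1$ end vertices into any local outer multiset resolving set $W$, so $|W\cap V_i|\ge n_i-2$; for a triangle block, if both $a_i,b_i$ lie outside $W$ they form an adjacent pair with equal representations (every element of $W$ is at distance $1$ or $2$ from both), so $|W\cap V_i|\ge 1=n_i-2$ there too. Summing over the disjoint $V_i$ with $n_i\ge 3$ gives $ldim_{ms}(G)\ge\sum_{n_i\ge 3}(n_i-2)$. For the upper bound I would use the ``leave one vertex out of each large block'' set $W=\bigcup_{n_i\ge 3}\bigl(V_i\setminus\{u_i\}\bigr)$: then $V(G)\setminus W$ is $\{v\}$ together with the $u_i$ and the pendants, the only adjacent pairs among them are of the form $(v,x)$ with $x$ some $u_i$ or a pendant, and one reads off $m(v\mid W)=\{1^{|W|}\}$, $m(u_i\mid W)=\{1^{\,n_i-2},2^{\,|W|-(n_i-2)}\}$, $m(\text{pendant}\mid W)=\{2^{|W|}\}$, which are pairwise distinct from $m(v\mid W)$ provided there is a second large block so that $|W|-(n_i-2)>0$. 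With the lower bound this gives $ldim_{ms}(G)=\sum_{n_i\ge 3}(n_i-2)$ in the ``otherwise'' case, while the two special rows ($G$ a star; $G$ a single triangle with pendants, where no single vertex works but any two end vertices of the $K_3$ do) are checked directly.

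The hard part is not the lower bounds — these are almost immediate once one observes that the blocks are the only cliques with many end vertices — but the upper-bound constructions and the boundary between the case buckets. One must choose which end vertex of each $K_3$ (respectively which $n_i-2$ vertices of a larger block) to keep out of $W$ so that $v$ stays distinguishable from every leftover vertex and every pendant adjacent to it, and one must separately treat the degenerate configurations (a single nontrivial block, or exactly one block of size at least $3$) in which the generic $W$ above leaves $v$ with the same representation as a leftover vertex, forcing a smaller or larger set; these configurations are precisely what pins down the small-value rows of the two formulas. The remainder of the write-up is the edge-by-edge distance bookkeeping verifying that each exhibited set is indeed (local, resp. local outer) multiset resolving.
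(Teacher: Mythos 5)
Your overall route is the same as the paper's: lower bounds from Theorems \ref{K-end} and \ref{outK-end} applied to the blocks (which are the only large cliques), and upper bounds from the ``one non-identifying vertex per $K_3$'' set in part (1) and the ``all but one end vertex per large block'' set in part (2). Your explicit multiset bookkeeping for part (1) is correct and matches what the paper leaves implicit.

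The one place you hedge is the real issue, and you do not resolve it correctly. You note that the part (2) construction distinguishes $v$ from the leftover end vertex $u_i$ only ``provided there is a second large block so that $|W|-(n_i-2)>0$,'' and then claim the degenerate configurations are absorbed by the small-value rows. They are not: the configuration $m_3=0$, $m_{\geq 4}=1$ (one block $K_{n_1}$ with $n_1\geq 4$ plus pendant edges) lands in the ``otherwise'' row, which asserts $ldim_{ms}=n_1-2$. But there Theorem \ref{outK-end} forces any $W$ of size $n_1-2$ to consist exactly of $n_1-2$ of the $n_1-1$ end vertices of that block, so the remaining end vertex $u_1$ and the hub $v$ are adjacent, both lie outside $W$, and both have representation $\{1^{n_1-2}\}$; hence $ldim_{ms}\geq n_1-1=\sum_{n_i\geq 3}(n_i-2)+1$ (and $W=V_1$ shows equality). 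So your caveat is not a loose end to be tidied --- it is a genuine counterexample to the ``otherwise'' row in that subcase, one that the paper's own proof also misses (its claim that the leftover end vertex's representation ``also contains some $2$s'' is false when there is only one block of size at least $3$). A complete write-up must either add the hypothesis that at least two blocks have $n_i\geq 3$, or split off this subcase with value $\sum(n_i-2)+1$. A second, smaller boundary issue of the same flavor: since $n_i=1$ is allowed, $m_3=1$ with all other $n_i=1$ gives $G=K_3$, where $lmd=\infty$ rather than $2$; both your argument and the paper's tacitly assume a leaf exists.
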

\begin{proof} For the first part, it is clear that $n_i \leq 3$ for every $1\leq i \leq m$ from Theorem \ref{K-end}. 

If $m_3 = 0$, then $n_i=1$ or $2$ for every $i$ and $Amal(K_{n_i},m)$ is a star, and the singleton set of the center vertex is resolving. If $m_3=1$, it is clear that $Amal(K_{n_i},m)$ is not bipartite and $lmd(Amal(K_{n_i},m))\geq 2$. Consider $W$ a set that contains a leaf and a vertex in $K_3$. It is easy to see that $W$ is resolving and so $lmd(Amal(K_{n_i},m))=2$. If $m_3\geq 2$, then by Theorem \ref{K-end}, $lmd(Amal(K_{n_i},m)) \geq m_3$. Collect a single nonidentifying vertex from each $K_3$ to obtain a local multiset basis for $Amal(K_{n_i},m)$ of cardinality $m_3$.

For the second part, the results for $m_3 = 0$ and $m_{\geq 4} = 0$ or $m_3 = 1$ and $m_{\geq 4} = 0$ follow from the first part of the theorem. For the rest of the case, $ldim_{ms}(EdgeAmal(K_{n_i},m)) \geq \Sigma_{n_i\geq 3} (n_i-2)$ by Theorem \ref{outK-end}. Consider $W$ as the set that contains exactly the $n_i-2$ $K$ end vertices of each $K_{n_i}, n_i\geq 3$. Then, the only pairs of adjacent vertices outside $W$ are the identifying vertex, say $v_0$, and a $K$-end vertex in each $K_{n_i}$. The representation of $v_0$ with respect to $W$ contains only $1$s, while the representation of a $K$-end vertex with respect to $W$ also contains some $2$s. And so, $W$ is a basis.
\end{proof}

\begin{theorem} \label{edgeamal}
Let $m\ge 2$ and $n_i \geq 2$, for every $1 \leq i \leq m$.
\begin{enumerate} 
    \item The $EdgeAmal(K_{n_i},m)$ has finite local multiset dimension if and only if $n_i \leq 4$ for every $1 \leq i \leq m$. Moreover, if $m_2, m_3$, and $m_4$ are the number of $K_2$s, $K_3$s, and $K_4$s, respectively, then 
    \[lmd(EdgeAmal(K_{n_i},m))=
    \left\{
    \begin{array}{ll}
        1, & \text{if } m_2 = m, \\
        3, & \text{if } m_3 \neq 0 \text{ and } m_2 + m_3 = m,\\
        m_4+1, & \text{if } m_4 \neq 0.
    \end{array}
    \right.\]   
    \item If $m_2, m_3$, and $m_{\geq 4}$ are the number of $K_2$s, $K_3$s, and $K_n$s ($n\geq 4$), respectively, then
    \[ldim_{ms}(EdgeAmal(K_{n_i},m))=\left\{
    \begin{array}{ll}
        1, & \text{if } m_2 = m, \\
        2, & \text{if } 1 \leq m_3 \leq 2 \text{ and } m_2 + m_3 = m,\\
        3, & \text{if } m_3 \geq 3 \text{ and } m_2 + m_3 = m,\\
        \Sigma_{n_i\geq 4} (n_i-3) +1, & \text{if } m_{\geq 4} \neq 0.
    \end{array}
    \right.\]
\end{enumerate}
\end{theorem}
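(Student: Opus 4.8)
The plan is to work throughout with a fixed description of $G=EdgeAmal(K_{n_i},m)$: write $uv$ for the common edge and, for each $i$, let $A_i$ be the set of the $n_i-2$ vertices of $K_{n_i}$ distinct from $u$ and $v$ (the apexes of $K_{n_i}$). Then $\mathrm{diam}(G)\le 2$ and the distances are completely described by $d(u,v)=1$; $d(u,a)=d(v,a)=1$ for every apex $a$; $d(a,a')=1$ when $a,a'\in A_i$; and $d(a,a')=2$ when $a\in A_i$, $a'\in A_j$ with $i\ne j$. In particular each apex of $K_{n_i}$ has degree $n_i-1$, hence is a $K_{n_i}$-end vertex, and there are $n_i-2$ of them; by Theorem \ref{K-end} a finite local multiset dimension therefore forces $n_i-2\le 2$ for every $i$, which is the necessity half of Part 1, the sufficiency half coming out of the explicit bases below. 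I would also record two small but decisive facts: (i) to separate the adjacent pair $\{u,v\}$ one needs $u\in W$ or $v\in W$, since every other vertex is equidistant from $u$ and $v$; and (ii) if $W$ is a two-element set whose two members are adjacent, then those two members have equal representations, so $W$ cannot resolve their edge.

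For Part 1 I would then argue case by case, following the statement. If $m_2=m$ then $G=K_2$ and $\{u\}$ is a local multiset basis. If every clique is $K_2$ or $K_3$ and at least one is $K_3$, the adjacent pairs are exactly $\{u,v\}$ and $\{u,c\},\{v,c\}$ for apexes $c$; facts (i) and (ii) rule out every set of size $\le 2$, and I would verify that $W=\{u,c_1,c_2\}$ for two apexes $c_1,c_2$ resolves everything (the second apex is what forces a $2$ into the representation of a neighbour of $u$), giving $lmd=3$. If some clique is a $K_4$, then each $K_4$ contributes exactly two $K_4$-end vertices, so by the moreover clause of Theorem \ref{K-end} exactly one apex of each such $K_4$ must lie in every local resolving set; combined with fact (i) this yields $|W|\ge m_4+1$, and I would check that $W=\{u\}\cup\{\text{one apex from each }K_4\}$ is a local multiset basis, the apex of a second $K_4$ again supplying the separating distance $2$.

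For Part 2 the scheme is the same with Theorem \ref{outK-end} in place of Theorem \ref{K-end}. The structural point to exploit is that once the forced apexes and one of $u,v$ are in $W$, every remaining vertex other than (say) $v$ has no neighbour within $V(G)\setminus W$, so the only adjacent pairs left to check are of the form $v$--apex, which are separated by a single distance-$2$ entry. Concretely: $m_2=m$ gives $\{u\}$; when all cliques are $K_2$ or $K_3$ I would exhibit the explicit small resolving sets ($\{u,v\}$ when there are few triangles, a marginally larger set otherwise) and use facts (i)--(ii) for the matching lower bounds; and when some $K_{n_i}$ has $n_i\ge 4$, Theorem \ref{outK-end} forces all but one apex of each such clique into $W$ --- contributing $\sum_{n_i\ge 4}(n_i-3)$ --- fact (i) forces one more, and $W=\{u\}\cup\bigcup_{n_i\ge 4}\big(A_i\setminus\{a_i\}\big)$ is verified to work.

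The main obstacle will be the careful handling of the degenerate small cases rather than any single computation. When there is only one ``large'' clique the candidate basis above degenerates --- for instance $\{u,a_1\}$ cannot separate the edge $\{u,a_1\}$ --- so an extra vertex must be appended, and one must also keep in mind that $EdgeAmal$ of several copies of $K_2$ together with a single $K_3$ or $K_4$ collapses to $K_3$ or $K_4$, whose local multiset dimension is infinite. Threading these boundary cases, and in each surviving case listing the (always short) set of adjacent pairs and reading off the representations, is where the real bookkeeping lies; the $u\leftrightarrow v$ symmetry of $G$ and the fact that only the distances $0,1,2$ occur keep each individual check brief.
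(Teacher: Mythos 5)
Your overall route is the same as the paper's: Theorem \ref{K-end} (respectively \ref{outK-end}) to force the apexes, the observation that one of $u,v$ must lie in $W$ because every other vertex is equidistant from them, and explicit diameter-two bases checked by listing the few adjacent pairs. The genuine gap is in Part 2, in the case $m_2+m_3=m$. Apexes of distinct triangles are pairwise non-adjacent, so under the ``adjacent pairs outside $W$'' reading of local outer resolving -- the reading you rely on later when you reduce all remaining checks to the pairs $v$--apex -- the set $\{u,v\}$ is \emph{vacuously} a local outer resolving set for every $m_3$, and the claimed value $3$ for $m_3\geq 3$ is unreachable. The paper's own argument for this subcase distinguishes \emph{non-adjacent} vertices outside $W$ (consistent with the formal definition in its introduction, not with the abstract): for $W=\{u,c\}$ with $c$ an apex, all remaining apexes share the representation $\{1,2\}$ once $m_3\geq 3$. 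So you must first fix which definition the statement is being proved for, and then supply a genuine lower-bound argument for $3$: your facts (i)--(ii) do not give it, since (ii) is vacuous in the outer setting (vertices of $W$ need not be distinguished) and (i) does not exclude the 2-sets $\{u,v\}$ or $\{u,c\}$. Relatedly, ``$\{u,v\}$ when there are few triangles'' is not the right witness under the paper's reading: it already fails at $m_3=2$, where both apexes receive $\{1^2\}$; the working 2-set there is hub plus apex.

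Your instinct about the degenerate cases is correct, but they are not mere bookkeeping and cannot be absorbed by ``appending an extra vertex.'' When there is exactly one $K_4$ (and at least one $K_3$), every set of size $m_4+1=2$ compatible with the forced structure leaves the adjacent pair consisting of $v$ and the free apex with identical representations $\{1^2\}$, so $lmd=3\neq m_4+1$; adding a vertex proves a different value from the stated formula, not the formula. (The same collision sits in the paper's own verification, where $m(x|W)=\{1^2,2^{m_4-1}\}$ coincides with $m(v|W)=\{1^{m_4+1}\}$ exactly at $m_4=1$; analogously Part 2 fails for a single $K_{\geq 4}$, and the case $m_3=1$ with $m_2+m_3=m$ collapses to $K_3$, whose local multiset dimension is infinite.) As planned, your proof cannot close these cases; they need to be flagged as exceptions to the statement rather than threaded through the same construction.
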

\begin{proof}
The necessary and sufficient condition for the finiteness of the local multiset dimension of $EdgeAmal(K_{n_i},m)$ follows directly from Theorem \ref{outK-end}. 

Let $uv$ be the identifying edge in $EdgeAmal(K_2,m)$ for the remaining proof. 

If $m_2=m$, then all the $K_{n_i}$s are $K_2$ and $EdgeAmal(K_2,m)=K_2$. Thus, $lmd(EdgeAmal(K_{n_i},m))=ldim_{ms}(EdgeAmal(K_{n_i},m))=1$.

If $m_3 \neq 0$ and $m_2 + m_3 = m$, then $EdgeAmal(K_{n_i},m)$ is not bipartite; and so $lmd(EdgeAmal(K_{n_i},m))\geq 2$ and $ldim_{ms}(EdgeAmal(K_{n_i},m))\geq 2$. Let $X=\{x,y\}$ be a set of two vertices in $EdgeAmal(K_{n_i},m)$. Consider two cases: (i) $x$ and $y$ are nonadjacent, where $m(u|X)=m(v|X)$, and (ii) $x$ and $y$ are adjacent, 
where $m(x|X)=\{0,1\}=m(y|X)$. Thus, $lmd(EdgeAmal(K_{n_i},m))\geq 3$. Taking a set containing two nonadjacent vertices and $u$ gives us a local multiset basis; and so $lmd(EdgeAmal(K_{n_i},m))=3$. However, if we look for a local outer multiset resolving set, we must consider two subcases of (ii). First, $x=u$ and $y=v$, where all nonadjacent vertices have the same representation multiset $\{1^2\}$, or $x=u$ and $y\neq v$, which, if $m_3=1$ or $2$ will give a local outer resolving set and if $m_3 \geq 3$ will lead to the same representation multiset $\{1,2\}$ for all all nonadjacent vertices.  

If there exists some $i$ with $n_i=4$, then by Theorem \ref{K-end}, $lmd(EdgeAmal(K_{n_i},m)) \geq m_4$. It is clear that one of $u$ and $v$ must be in a local multiset resolving set. Consider $W$ as the set of a single nonidentifying vertex from each $K_4$ together with $u$. Then, $m(u|W)=\{0,1^{m_4}\}$, $m(v|W)=\{1^{m_4+1}\}$, $m(x|W)=\{1^{2},2^{m_4-1}\}$, for $x\notin W \cup \{u,w\}$, and $m(y|W)=\{0,1,2^{m_4-1}\}$, for $y\in W \backslash \{u\}$. Thus, $W$ is local multiset basis for $EdgeAmal(K_{n_i},m)$ of cardinality $m_4 + 1$.


On the other hand, $ldim_{ms}(EdgeAmal(K_{n_i},m)) \geq \Sigma_{n_i\geq 4} (n_i-3)$ by Theorem \ref{outK-end}. It is clear that one of $u$ and $v$ must be in a local outer multiset resolving set. Consider $W$ as the set that contains exactly the $n_i-3$ $K$-end vertices of each $K_{n_i}, n_i\geq 4$, together with $u$. Then, the only pairs of adjacent vertices outside $W$ are $v$ and a $K$-end vertex in each $K_{n_i}$. Thus, $m(v|W)$ contains only $1$s, while the representation of a $K$-end vertex with respect to $W$ always contains some $2$s. Consequently, $W$ is a local outer multiset basis.
\end{proof}

\vspace{3mm}
The \textit{corona product} of a graph $G$ (of order $n$) and a sequence of graphs $H_i$ ($1\leq i\leq m$), $G \odot H_i$, is the graph obtained by joining the $i$-th vertex of $G$ with an edge to every vertex of $H_i$.\\

\begin{theorem} \label{corona}
 Let $G$ be a graph of order $n \geq 1$ and $m_i \geq 1$, for every $1\leq i \leq m$.
 \begin{enumerate}
    \item If $G \odot K_{m_i}$ has a finite local multiset dimension, then $m_i\leq 2$ for every $1\leq i \leq m$. Moreover, if $m_i\leq 2$ for every $1\leq i \leq m$, then $lmd(G \odot K_{m_i}) \geq m$ and this bound is sharp.  
    \item $ldim_{ms}(G \odot K_{m_i}) \geq \Sigma_{i=1}^m (m_i-1)$ and this bound is sharp.
\end{enumerate}
\end{theorem}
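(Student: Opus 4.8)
The plan is to read everything off the clique structure of $G\odot K_{m_i}$ and then invoke Theorems~\ref{K-end} and~\ref{outK-end} for the lower bounds, reserving the real effort for the sharpness examples. Write $v_1,\dots,v_m$ for the vertices of $G$ and let $K^{(i)}$ be the copy of $K_{m_i}$ joined to $v_i$. Three facts are used throughout: (a) $C_i:=\{v_i\}\cup V(K^{(i)})$ induces a clique on $m_i+1$ vertices; (b) every vertex of $K^{(i)}$ has degree exactly $m_i$ in $G\odot K_{m_i}$, so when $m_i\ge 2$ all $m_i$ of them are $C_i$-end vertices; (c) the only vertex of $C_i$ reachable from outside $C_i$ is $v_i$, so every $x\notin C_i$ satisfies $d(x,w)=d(x,v_i)+1$ for all $w\in V(K^{(i)})$, and in particular every vertex other than the members of $V(K^{(i)})$ is equidistant from all of $V(K^{(i)})$.

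For part~(1), necessity is immediate: if $m_i\ge 3$ then $C_i$ is a clique with at least three $C_i$-end vertices, so $G\odot K_{m_i}$ has infinite local multiset dimension by Theorem~\ref{K-end}. For the lower bound, suppose every $m_i\le 2$. Each $i$ with $m_i=2$ gives a triangle $C_i$ whose two non-$G$ vertices $a_i,b_i$ are $C_i$-end vertices, so by Theorem~\ref{K-end} exactly one of $a_i,b_i$ lies in every local resolving set, hence in every local multiset resolving set; as these pairs are pairwise disjoint, $lmd(G\odot K_{m_i})$ is at least the number of indices with $m_i=2$, which equals $m$ when every $m_i=2$ (indices with $m_i=1$ impose no such constraint, and the degenerate case $G=K_1,\ m_1=2$ produces $K_3$, of infinite dimension). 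For sharpness I would take $G=P_m$ with $m\ge 3$ odd and all $m_i=2$, and show that $W=\{a_1,\dots,a_m\}$, one end-vertex per triangle, is a local multiset resolving set: then $m(v_i|W)$ is the multiset of distances from $v_i$ in $P_m$ with each entry increased by $1$, so the edges $v_iv_{i+1}$ are separated because adjacent vertices of an odd path have distinct distance multisets, while the three edges inside each triangle are separated by inspection (they differ in whether $0$ occurs, or in their largest entry). This gives $lmd(P_m\odot K_2)=m$.

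For part~(2), fix $i$. If $m_i\ge 3$, Theorem~\ref{outK-end} applied to $C_i$ puts at least $m_i-1$ of its $m_i$ end-vertices in every local outer multiset resolving set; if $m_i=2$, the argument proving Theorem~\ref{outK-end} still applies, since leaving both of $a_i,b_i$ out of $W$ would give them the same representation multiset with respect to $W$ (every vertex other than $a_i,b_i$ is equidistant from both, by~(c)) although $a_i\sim b_i$, forcing at least $1=m_i-1$ of them into $W$; and $m_i=1$ contributes $m_i-1=0$. Summing over the pairwise-disjoint sets $V(K^{(i)})$ gives $ldim_{ms}(G\odot K_{m_i})\ge\sum_{i=1}^m(m_i-1)$. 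For sharpness the graph $P_3\odot K_2$ again works: the set $W$ consisting of one vertex of each of the three triangles has size $3=\sum_{i=1}^{3}(m_i-1)$, the only adjacent pairs left outside $W$ are $v_1v_2$, $v_2v_3$ and the three edges $v_ia_i$, and each of these is separated because $a_i$ lies one step farther than $v_i$ from the other two triangles and because $P_3$ already separates its centre from its ends; more generally $P_n\odot K_{m_0}$ with $n$ odd realises equality for every fixed $m_0\ge 2$.

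The two lower bounds are routine once the clique structure and fact~(c) are in hand. The step I expect to be the main obstacle is the sharpness constructions, because the minimal candidate set --- one, respectively $m_i-1$, vertices from each attached clique --- need not separate the edges of $G$ itself: for $G=K_2$ the two vertices are interchangeable and receive identical representation multisets, and for $G=P_4$ the two central vertices are interchangeable. So one must isolate a workable family of base graphs, here odd paths (or, more generally, graphs whose adjacent vertices already have distinct distance multisets), verify the handful of remaining adjacent pairs, and dispose separately of the degenerate case $G=K_1$.
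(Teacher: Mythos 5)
Your proposal is correct and follows essentially the same route as the paper: both lower bounds are read off Theorems \ref{K-end} and \ref{outK-end} applied to the cliques $\{v_i\}\cup V(K_{m_i})$, and sharpness is witnessed by attaching $K_2$'s to a path. In fact you are slightly more careful than the paper's two-line proof: you correctly restrict the sharpness example to odd-order paths (for even order the two central path vertices receive identical representations with respect to any size-$m$ candidate set, so the paper's ``path of order at least $3$'' is too broad), and you supply the short twin argument needed for cliques with $m_i=2$, where Theorem \ref{outK-end} does not literally apply.
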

\begin{proof}
Both lower bounds are a direct consequence of Theorems \ref{K-end} and \ref{outK-end}, respectively. The sharpness of the bounds occurs when $G$ is a path of order at least $3$ and $m_i=2$ for every $1\leq i \leq m$.
\end{proof}

\begin{problem}
    For $1\leq i \leq m$ and $m_i \geq 1$, find a "good" upper bound for $lmd(G \odot K_{m_i})$ and $ldim_{ms}(G \odot K_{m_i})$.
\end{problem}

\section{Bounds for the local (outer) multiset dimension of graphs}
\label{bound}

In this section, we investigate the bounds for local (outer) multiset dimensions of graphs. We start by characterizing graphs with the local (outer) multiset dimensions equal to 1, which leads to the fact that 2 is a natural lower bound for the local (outer) multiset dimension of non-bipartite graphs.\\

\begin{theorem}\label{1}
$lmd(G)=ldim_{ms}(G)=1$ if and only if $G$ is bipartite.
\end{theorem}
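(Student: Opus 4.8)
The plan is to reduce the statement to the single-vertex case together with a parity argument. By Observation~\ref{ineq}(3) we have $1\le ldim(G)\le ldim_{ms}(G)\le lmd(G)$, so it suffices to prove that $lmd(G)=1$ holds precisely when $G$ is bipartite; the equality $ldim_{ms}(G)=1$ then follows at once from this chain of inequalities.

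For the ``if'' direction, I would fix a bipartition $(A,B)$ of $V(G)$ and an arbitrary vertex $w$, and show that $\{w\}$ is a local multiset resolving set. Given any edge $uv$, the endpoints $u$ and $v$ lie in different parts of the bipartition; since in a bipartite graph every walk joining two fixed vertices has the same parity of length (equivalently, $d(x,w)$ is even exactly when $x$ and $w$ lie in the same part), we get $d(u,w)\not\equiv d(v,w)\pmod 2$. In particular $d(u,w)\neq d(v,w)$, so $m(u\mid\{w\})=\{d(u,w)\}\neq\{d(v,w)\}=m(v\mid\{w\})$, and $\{w\}$ separates every adjacent pair. Hence $lmd(G)\le 1$, and therefore $lmd(G)=1$.

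For the ``only if'' direction, I would use the hypothesis $lmd(G)=1$ to take a one-element local multiset resolving set $\{w\}$. By definition $d(u,w)\neq d(v,w)$ for every edge $uv$; combined with the elementary fact that $|d(u,w)-d(v,w)|\le 1$ whenever $u$ and $v$ are adjacent, this forces $|d(u,w)-d(v,w)|=1$ on every edge. Consequently the map $c\colon V(G)\to\{0,1\}$ given by $c(x)\equiv d(x,w)\pmod 2$ is a proper $2$-colouring of $G$, so $G$ is bipartite.

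I do not expect a genuine obstacle: the only points needing care are the parity characterisation of bipartiteness used in the ``if'' direction and the triangle-type inequality $|d(u,w)-d(v,w)|\le 1$ used in the ``only if'' direction, both of which are standard. The single edge case worth a brief remark is that a connected graph containing an edge does have $lmd(G)\ge 1$ (the empty set cannot separate the endpoints of that edge), a fact already recorded in Observation~\ref{ineq}.
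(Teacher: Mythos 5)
Your proposal is correct and follows essentially the same route as the paper: both reduce to showing $lmd(G)=1$ iff $G$ is bipartite via $ldim_{ms}(G)\le lmd(G)$, prove the ``if'' direction by the parity of distances to a single vertex $w$, and prove the ``only if'' direction by observing that adjacent vertices must lie in consecutive distance levels from $w$ (your $|d(u,w)-d(v,w)|=1$ parity colouring is just a rephrasing of the paper's argument with the level sets $N_i$). No gaps to report.
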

\begin{proof} 
Since $ldim_{ms}(G) \leq lmd(G)$, the result follows if we can prove that $lmd(G)=1$ if and only if $G$ is bipartite. 

Let $U$ and $V$ be the partite sets of $G$, and $W=\{w\}$, for a vertex $w\in U$. Since $d(u,w)$ is even for all $u\in U$ and $d(v,w)$ is odd for all $v\in V$, then $W$ is a local multiset basis. 

To verify the converse, let $G$ be a non-trivial graph with $lmd(G)=1$, with $W=\{w\}$ a local multiset basis. If $e(w)$ is the eccentricity of $w$, let $N_i$ be the set of vertices of distance $i$ for $w$, for $0 \leq i \leq e(w)$. Since $W$ is a local multiset basis, all $N_i$s are independent sets. Moreover, if $i$ and $j$ are integers with $0\leq i, j\leq e(w)$ and $|i-j|\geq 2$, then no vertex in $N_i$ are adjacent to any vertex in $N_j$. In this case, $G$ is bipartite with partite sets $U=\bigcup_{{\rm even}\ i } N_i$ and $V=\bigcup_{{\rm odd}\ i} N_i$. 
\end{proof}

\begin{corollary} \label{2}
If $G$ is not bipartite, then $lmd(G) \geq 2$ and $ldim_{ms}(G) \geq 2$.     
\end{corollary}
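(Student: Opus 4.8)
The plan is to obtain this statement immediately from Theorem \ref{1} by contraposition. Both quantities $lmd(G)$ and $ldim_{ms}(G)$ are cardinalities of (local, resp. local outer) multiset resolving sets, so each is a positive integer — indeed at least $1$, as already recorded in Observation \ref{ineq} — while $lmd(G)$ may additionally be infinite. Theorem \ref{1} asserts that $lmd(G) = 1$ holds precisely when $G$ is bipartite, and likewise $ldim_{ms}(G) = 1$ holds precisely when $G$ is bipartite. Hence, if $G$ is not bipartite, then $lmd(G) \neq 1$ and $ldim_{ms}(G) \neq 1$; combining each of these with the lower bound $1$ gives $lmd(G) \geq 2$ and $ldim_{ms}(G) \geq 2$.

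There is no real obstacle here: all the content resides in Theorem \ref{1}, and this corollary is merely its contrapositive paired with the trivial observation that the two parameters cannot fall below $1$. The only point worth flagging is that a non-bipartite graph need not admit a finite local multiset dimension (cf. Theorem \ref{K-end}), so the inequality $lmd(G) \geq 2$ is understood to hold whenever $lmd(G)$ is finite (equivalently, under the convention $\infty \geq 2$); for $ldim_{ms}(G)$ no such caveat is required, since every graph has a finite local outer multiset dimension. The corollary is stated explicitly because it identifies $2$ as the natural starting value for the local (outer) multiset dimension on the large class of non-bipartite graphs, which is used repeatedly in the computations of Section \ref{smalldiam}.
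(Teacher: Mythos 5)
Your proof is correct and is exactly the intended argument: the paper states this as an immediate corollary of Theorem \ref{1} (contrapositive plus the trivial lower bound of $1$), with no further proof given. Your caveat about the convention $\infty \geq 2$ for non-bipartite graphs with infinite local multiset dimension is a reasonable and harmless clarification.
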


\vspace{3mm}
An example of non-bipartite graphs achieving the lower bounds in Corollary \ref{2} is given in the following theorem.\\ 

\begin{theorem} \label{unicyclic}
If $G$ is a non-cycle unicylic graph, then \[lmd(G) = ldim_{ms}(G) = \left \{\begin{array}{ll}
   1, & {\rm if} \ G \ {\rm contains \ an \ even \ cycle},\\
   2, & {\rm if} \ G \ {\rm contains \ an \ odd \ cycle}. 
\end{array}\right.\]
\end{theorem}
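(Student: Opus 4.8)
The plan is to split into the two cases according to the parity of the unique cycle $C$ in $G$, using the machinery already in place. Write $G$ as the unicyclic graph whose unique cycle is $C$; since $G$ is not a cycle, $G$ has at least one vertex of degree one, i.e. at least one tree hanging off $C$.

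\emph{Case 1: $C$ is even.} Then $G$ is bipartite, so Theorem \ref{1} immediately gives $lmd(G)=ldim_{ms}(G)=1$. Nothing further is needed here.

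\emph{Case 2: $C$ is odd.} Then $G$ is not bipartite, so Corollary \ref{2} gives the lower bound $lmd(G)\ge 2$ and $ldim_{ms}(G)\ge 2$; by Observation \ref{ineq}(3) it therefore suffices to produce a local multiset resolving set $W$ of size $2$, which will simultaneously serve as a local outer multiset resolving set and force equality in both dimensions. The natural choice is to take $W=\{w_1,w_2\}$ where $w_1$ and $w_2$ are two vertices of the odd cycle $C$ chosen as far apart as possible on $C$ (for $C$ of length $2t+1$, two vertices at distance $t$ and $t+1$ along the two arcs of $C$); one may also want $w_1,w_2$ to be consecutive or near-consecutive, whichever makes the bookkeeping cleanest. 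The key point to verify is that for every edge $uv$ of $G$ we have $m(u|W)\ne m(v|W)$. Vertices not on $C$ lie in trees attached at cut vertices of $C$; for such a vertex $x$ in a tree rooted at a vertex $c$ of $C$, one has $d(x,w_i)=d(x,c)+d(c,w_i)$, so the representation multiset of $x$ is a ``shift by $d(x,c)$'' of the multiset $\{d(c,w_1),d(c,w_2)\}$. I would record a short lemma describing $m(v|W)$ explicitly as a function of (i) which branch/cut-vertex $v$ sits under and (ii) the distance from $v$ to that cut vertex, and then check adjacency by distinguishing whether an edge $uv$ lies within a single tree branch, joins a branch to $C$, or lies on $C$ itself. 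For edges inside a tree or joining a tree to $C$, the two endpoints have distances to $c$ differing by exactly $1$, so their distance pairs to $w_1,w_2$ are $\{a,b\}$ and $\{a\pm1,b\pm1\}$ with the \emph{same} sign on both coordinates — these multisets can coincide only if $\{a,b\}=\{a+1,b-1\}$ as multisets, i.e. $a=b-1$, which one rules out by the choice of $w_1,w_2$ on the odd cycle (the eccentricity/parity argument: since $C$ is odd, the two ``reference distances'' from any point of the graph are never exactly one apart in the bad configuration). For edges on $C$ itself one checks the finitely many vertex types on the odd cycle directly.

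The main obstacle I expect is not any single step but the case analysis of edges meeting the cycle $C$: a cut vertex $c$ of $C$ simultaneously has neighbors on $C$ and in its hanging tree, and one must ensure the ``shift'' description is consistent there and that the parity obstruction really does separate $c$ from both kinds of neighbor. Concretely, the crux is choosing $w_1,w_2$ on the odd cycle so that the pair of distances $(d(v,w_1),d(v,w_2))$, viewed up to unordered multiset, never has the two entries differing by exactly $1$ for any $v\in V(G)$ — this is exactly what kills all ``shift-by-one'' collisions at once, and it is where oddness of $C$ is used. Once that is arranged, every adjacency check reduces to the elementary observation that $\{a,b\}\ne\{a+1,b+1\}$ always, and $\{a,b\}\ne\{a-1,b+1\}$ unless $|a-b|=1$. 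Finally, since the constructed $W$ already resolves all adjacent pairs in $V(G)$, it is a fortiori a local outer multiset resolving set, so $ldim_{ms}(G)\le 2$ as well, and combined with Corollary \ref{2} both dimensions equal $2$.
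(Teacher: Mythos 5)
Your even-cycle case and your lower bound for the odd-cycle case are fine, but the construction you propose for the upper bound cannot work, and the step you defer (``for edges on $C$ itself one checks the finitely many vertex types directly'') is exactly where it breaks. You take both resolving vertices $w_1,w_2$ on the odd cycle $C$. In a unicyclic graph the distance in $G$ between two cycle vertices equals their distance in $C$ (any walk leaving the cycle must return through the same vertex), so the representations of all cycle vertices are identical to what they would be in the bare odd cycle. But the paper's own argument for cycles shows that no $2$-subset of an odd cycle locally resolves it: if $w_1$ and $w_2$ are adjacent, then $m(w_1|W)=m(w_2|W)=\{0,1\}$ and $w_1w_2$ is an edge (a local multiset resolving set must also separate adjacent vertices inside $W$); if they are nonadjacent, then on the even-order arc $w_1=x_1,\ldots,x_{2t}=w_2$ the adjacent middle vertices satisfy $m(x_t|W)=\{t-1,t\}=m(x_{t+1}|W)$. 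In particular the property you call the crux --- that no vertex of $G$ has its two distances to $w_1,w_2$ differing by exactly $1$ --- is unattainable when both $w_i$ lie on $C$, whatever spacing (``as far apart as possible'' or ``consecutive'') you choose. So your $W$ is never a local multiset resolving set, and you cannot conclude $lmd(G)\le 2$ this way; at best the adjacent choice could be salvaged for the outer version, which is weaker than what the theorem asserts.

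The missing idea is that the non-cycle hypothesis must be used in the construction, not only mentioned: since $G$ is not a cycle, some cycle vertex $c_1$ has a neighbor $t_1$ outside $C$, and the paper takes $W=\{t_1,c_2\}$ with $c_2$ a cycle-neighbor of $c_1$, i.e.\ one resolving vertex off the cycle. This breaks the symmetry that defeats any purely-on-cycle choice: one then computes $m(c_i|W)$ explicitly around the cycle and checks that consecutive cycle vertices differ, while for vertices in the hanging trees the representations are uniform shifts (by the distance to the attachment vertex) of the attachment vertex's pair, so tree edges and tree-to-cycle edges are separated exactly as in your sketch. Your treatment of the tree edges is correct as far as it goes; the cycle edges are the genuine content, and they require the off-cycle vertex.
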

\begin{proof}
If $G$ contains an even cycle, it is bipartite, and the result follows. 

If $G$ contains an odd cycle, then it is not bipartite, and by Corrolary \ref{2}, $lmd(G) \geq 2$ and $ldim_{ms}(G) \geq 2$. Let $C=(c_1,\ldots,c_{2k+1}), k \geq 1$, be the cycle in $G$. If $S=V(G)-V(C)$, then for $1 \leq i \leq 2k+1$, the induced subgraph $T_i=G[S \cup{c_i}]$ is a tree. 

Since $G$ is not a cycle, there exists a $T_i$ that is not trivial, say $T_1$. Denote by $t_1$ a vertex in $G-C$ adjacent to $c_1$, and choose $W=\{t_1,c_2\}$. Thus,
\[m(c_1|W)=\{1,1\}, m(c_2|W)=\{0,2\},  m(c_{k+2}|W)=\{k,k+1\},\] \[m(c_i|W)=\{i-2,i\}, 3\leq i\leq k+1,\
{\rm and} \ m(c_i|W)=\{n-i+2,n-i+2\}, k+3\leq i\leq n.\] 
This means that all adjacent vertices in $C$ have distinct representation multisets.

If $x$ is a vertex in $T_1$, $m(x|W) = \{d_G(x,u_1),d_G(x,u_1)+2\}$; on the other hand, if $x$ is a vertex in $T_i, i\neq 1$, $m(v|W)=m(c_j|W)+d_G(v,c_j)$. Since all $T_i$s are trees, all adjacent vertices in $T_i$ have distinct representation multisets. 

Thus, $W$ is a local multiset resolving set of $G$. Since $ldim_{ms}(G) \leq lmd(G)$, the result follows.
\end{proof}

\vspace{3mm}
Interestingly, when a unicyclic graph is a cycle, its local multiset dimension generally differs from that of a non-cycle unicyclic graph. The following result of the local multiset dimension for cycles has been proved in \cite{ADKA19}. However, we revised the result slightly to incorporate cycles with infinite local multiset dimension, which were missed there.\\  

\begin{theorem}
For $n\geq3$, 
$$lmd(C_n)=\left \{ \begin{array}{ll}
1, {\rm if } \ n \ {\rm is \ even},\\ 
3, {\rm if } \ n \geq 7 \ {\rm is \ odd},\\
\infty, {\rm if} \ n=3,5,
\end{array}
\right.$$
and $$ldim_{ms}(C_n)=\left \{ \begin{array}{ll}
1, {\rm if } \ n \ {\rm is \  even},\\ 
2, {\rm if } \ n \ {\rm is \  odd}.
\end{array}
\right.$$
\end{theorem}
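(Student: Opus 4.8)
The plan is to handle the even case, the odd case $n \geq 7$, and the small odd cases $n = 3, 5$ separately, and in each case to treat $lmd$ and $ldim_{ms}$ in parallel since the constructions overlap.

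\textbf{Even cycles.} Both $lmd(C_n) = 1$ and $ldim_{ms}(C_n) = 1$ follow immediately from Theorem \ref{1}, since $C_n$ is bipartite when $n$ is even. Nothing further is needed here.

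\textbf{Odd cycles, the outer dimension.} For $n$ odd, $C_n$ is not bipartite, so Corollary \ref{2} gives $ldim_{ms}(C_n) \geq 2$. For the matching upper bound I would exhibit an explicit two-element local outer resolving set. Take $W = \{w_1, w_2\}$ to be two adjacent vertices on the cycle, say consecutive vertices $c_1, c_2$. Writing $n = 2k+1$, I would compute $m(c_i \mid W)$ for each $i$: the two distances $d(c_i, c_1)$ and $d(c_i, c_2)$ differ by exactly $1$ except near the vertex antipodal to the edge $c_1 c_2$, where they are equal. The key point is that for any two adjacent vertices $c_i, c_{i+1}$ outside $W = \{c_1, c_2\}$, the pair $\{d(c_i, c_1), d(c_i, c_2)\}$ and $\{d(c_{i+1}, c_1), d(c_{i+1}, c_2)\}$ are different multisets — one "shifts" by one step around the cycle. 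A short case check (vertices on the two arcs between $c_1$ and $c_2$, and the antipodal region) confirms all adjacent pairs in $V(C_n) \setminus W$ are distinguished, so $ldim_{ms}(C_n) = 2$. I expect no obstacle here beyond bookkeeping the distance multisets.

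\textbf{Odd cycles $n \geq 7$, the (non-outer) local multiset dimension.} The lower bound $lmd(C_n) \geq 2$ is Corollary \ref{2}; the real work is ruling out $lmd(C_n) = 2$ and then ruling out, for $n = 3, 5$, any finite value at all, while producing a resolving set of size $3$ for $n \geq 7$ odd. To show $lmd(C_n) \neq 2$ for odd $n$: given any two-element set $W = \{x, y\}$ at cyclic distance $d$ apart, I would argue that there is always a pair of adjacent vertices with equal distance multisets. If $x$ and $y$ are adjacent, $m(x \mid W) = m(y \mid W) = \{0,1\}$; if they are nonadjacent, then the vertex (or the two vertices) "antipodal" to the arc structure forces a repeated multiset among two neighbors — concretely, on an odd cycle every pair of vertices leaves a symmetry in the distance vector that pins two consecutive vertices to the same multiset. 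This parity/symmetry argument is the crux of the theorem and the step I expect to be the main obstacle: one must show the failure occurs for every placement of the two landmarks, which requires carefully splitting on the parity of $d$ and on whether a vertex lies on the shorter or longer arc. For $n = 3$ and $n = 5$, I would push the same reasoning further: with $n \leq 5$ the cycle has diameter at most $2$, so Theorem \ref{infmd}(1) does not directly apply to $lmd$, but a direct exhaustive check over all subsets $W$ (up to symmetry there are very few) shows some adjacent pair always collides — hence $lmd(C_3) = lmd(C_5) = \infty$. Finally, for $n \geq 7$ odd I would verify that a suitable three-element set is a local multiset resolving set; the natural candidate (e.g. guided by Theorem \ref{unicyclic}'s construction, or the known set from \cite{ADKA19}) is three vertices placed so that no two adjacent vertices share a distance multiset, checked arc-by-arc. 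Assembling these pieces gives the stated formula.
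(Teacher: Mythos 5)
Your overall route is the same as the paper's: bipartiteness for even $n$, Corollary \ref{2} for the lower bound $2$, a case analysis over $2$-element sets to rule out $lmd(C_n)=2$, exhaustion for $n=3,5$, a known size-$3$ set (or $md(C_n)=3$) for the upper bound when $n\geq 7$, and two adjacent landmarks for $ldim_{ms}(C_n)=2$. The outer-dimension part and the even case are fine as sketched.

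The genuine gap is exactly the step you flag as "the crux": you never actually prove that a nonadjacent pair $W=\{x,y\}$ fails, only assert that an "antipodal" symmetry "pins two consecutive vertices to the same multiset" and predict a delicate case split on the parity of the gap and on shorter versus longer arcs. No such case split is needed, and without the concrete argument the lower bound $lmd(C_n)\geq 3$ (and hence the $\infty$ conclusion for $n=5$ at size $2$) is unproved. The paper's argument is one line: since $n$ is odd, of the two $x$--$y$ arcs of the cycle one contains an even number of vertices; write it as $x=x_1,x_2,\ldots,x_{2t}=y$. Then the two middle vertices $x_t$ and $x_{t+1}$ are adjacent, do not lie in $W$ (as $t\geq 2$ in the nonadjacent case), and both have representation multiset $\{t-1,t\}$, because the path distances here coincide with cycle distances ($2t\leq n$). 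Inserting this observation closes your main gap; the remaining items (exhaustive check of all subset sizes for $n=3,5$, and verification of a concrete $3$-set or citation of $md(C_n)=3$ for odd $n\geq 7$) are routine and match what the paper does.
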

\begin{proof}
Since even cycles are bipartite, Theorem \ref{1} implies that $lmd(C_n)=ldim_{ms}(C_n)=1$ for even $n$. For $n=3$, let $S$ be an arbitrary set with cardinality 2. Thus, the two vertices in $S$ are adjacent, and those two vertices have the same representation multisets with respect to $S$. Here we obtain $lmd(C_3) > 2$. However, since there is only one vertex outside $S$, we have $ldim_{ms}(C_3)=2$. Taking $S=V(C_3)$ results in all vertices have the same representation multisets with respect to $S$, which leads to $lmd(C_3) = \infty$.       
For odd $n\geq 5$, let $S=\{u,v\}$ be an arbitrary set with cardinality 2. Consider two cases: when $u$ and $v$ are adjacent and when they are not adjacent. In the first case, $m(u|S)=m(v|S)$. In the second case, there are two paths connecting $u$ and $v$, one in odd order and the other in even order. We denote the vertices in the even path $u=x_1,x_2, \ldots, x_{2t}=v$. Here, $m(x_t|S)={t-1,t}=m(x_{t+1}|S)$. Therefore, $lmd(C_n) > 2$. Since $md(C_n)=3, n\geq 6$, then $lmd(C_n)=3, n\geq 6$. For $n=5$, taking any subset $S$ of $V(C_5)$ results in two vertices that have the same representation multisets with respect to $S$. This concludes the proof for the first part of the theorem.

For the second part of the theorem, consider $S=\{u,v\}$, where $u$ and $v$ are adjacent. For odd $n \geq 3$, denote by $u=x_1,x_2, \ldots, x_n=v$, the vertices on the longer path connecting $u$ and $v$. Then, $m(x_{\lceil \frac{n}{2} \rceil}|S)= \{ \lfloor \frac{n}{2} \rfloor^2 \}$ and $m(x_i|S)=m(x_{n+1-i}|S)=\{i-1,i\}$. Therefore, no two adjacent vertices have the same representation multisets with respect to $S$, and $ldim_{ms}(C_n)=2$, for odd $n$.
\end{proof}

\vspace{3mm}
The next theorem provides a sharp lower bound for graphs' local (outer) multiset dimensions as a function of the order of the largest complete subgraph.\\
\begin{theorem}\label{clique1}
Let $G$ be a graph and $n\geq 2$. If $\omega(G)$ is the order of a maximal clique in $G$, then
$$ldim_{ms}(G)\geq \lceil \log_2\omega(G)\rceil \ \text{and} \ lmd(G)\geq \lceil \log_2\omega(G)\rceil.$$ 
Moreover, there exists a graph $G$ with $\omega(G)=n$ such that $ldim_{ms}(G)=lmd(G)=\lceil \log_2n\rceil$.
\end{theorem}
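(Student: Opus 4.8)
The plan is to obtain both lower bounds from one counting argument applied to a maximum clique, and to establish sharpness through a single explicit family obtained from $K_n$ by attaching paths.

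\emph{Lower bound.} Fix a clique $K$ of $G$ with $|V(K)|=\omega:=\omega(G)$, and let $W$ be any local multiset resolving set — respectively, any local outer multiset resolving set — with $k:=|W|$. For $w\in W$ put $\delta_w=\min_{x\in V(K)}d(x,w)$; since any two vertices of $K$ are at distance at most $1$, the triangle inequality forces $d(u,w)\in\{\delta_w,\delta_w+1\}$ for every $u\in V(K)$, so each $u\in V(K)$ acquires a vector $\beta(u)\in\{0,1\}^{W}$ defined by $\beta(u)_w=d(u,w)-\delta_w$. If $w\in W\cap V(K)$ then $\delta_w=0$ and $d(u,w)=1$ for all $u\in V(K)\setminus W$, so those coordinates are constant on $V(K)\setminus W$; hence $\beta$ restricted to $V(K)\setminus W$ has image of size at most $2^{\,k-|W\cap V(K)|}$. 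The vertices of $V(K)\setminus W$ are pairwise adjacent and lie outside $W$, so they need pairwise distinct representation multisets; but $\beta(u)=\beta(v)$ would give $d(u,w)=d(v,w)$ for all $w\in W$, hence $m(u|W)=m(v|W)$, so $\beta$ must be injective on $V(K)\setminus W$. Writing $a:=|W\cap V(K)|$,
\[
\omega \;=\; a+|V(K)\setminus W| \;\le\; a+2^{\,k-a} \;\le\; 2^{k},
\]
using $a+2^{k-a}\le 2^{k}$ for $0\le a\le k$. Hence $k\ge\log_2\omega$, i.e.\ $k\ge\lceil\log_2\omega\rceil$; the argument only used adjacent pairs inside $V(K)\setminus W$, so it yields both $ldim_{ms}(G)\ge\lceil\log_2\omega(G)\rceil$ and $lmd(G)\ge\lceil\log_2\omega(G)\rceil$.

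\emph{Sharpness.} Put $k=\lceil\log_2 n\rceil$ and $[k]=\{1,\dots,k\}$. Take a clique on $v_1,\dots,v_n$, fix distinct subsets $S_1=\varnothing,S_2,\dots,S_n\subseteq[k]$ (possible as $n\le 2^k$), and for each $j\in[k]$ attach a new path with vertices $q_{j,0},q_{j,1},\dots,q_{j,L_j}$, writing $r_j:=q_{j,0}$ and $w_j:=q_{j,L_j}$ and joining $r_j$ to exactly those $v_i$ with $j\in S_i$. Here the lengths $0\le L_1<\dots<L_k$ are chosen so that the intervals $\{L_j+1,L_j+2\}$ are pairwise disjoint and none of the $L_j$ lies in any of them — for instance $L_j=3(j-1)$ when $k\ge 3$. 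Set $W=\{w_1,\dots,w_k\}$. Since $k=\lceil\log_2 n\rceil$ forces $n>2^{k-1}$, any $n-1$ distinct nonempty subsets of $[k]$ must jointly cover $[k]$ (otherwise they all avoid a common element, of which there are only $2^{k-1}-1$), so $\bigcup_i S_i=[k]$ and $G$ is connected; also $\omega(G)=n$, because distinct attached paths are mutually nonadjacent and meet the rest of $G$ only at the clique, while a clique using a vertex of the $j$-th path can use only $r_j$ from it and only clique vertices $v_i$ with $j\in S_i$ — and $v_1$ is not among those.

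\emph{Verification and main obstacle.} One checks that $W$ is a local multiset resolving set. Indeed $d(v_i,w_j)=L_j+1$ if $j\in S_i$ and $L_j+2$ otherwise; as the intervals $\{L_j+1,L_j+2\}$ are disjoint, $m(v_i|W)$ records $S_i$ block by block, so distinct clique vertices get distinct representations. Each attached vertex is separated from its clique neighbours and from vertices of the other paths by an entry outside every interval (the entry $L_j$ of $m(r_j|W)$, or the $0$ of $m(w_j|W)$); and for consecutive path vertices $q_{j,t},q_{j,t+1}$ the $w_j$-coordinate drops by $1$ while the remaining $k-1$ coordinates all rise by $1$, which (when $k\ge 3$) already changes the sum of the entries, hence the multiset. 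The residual cases $k\le 2$, i.e.\ $n\le 4$, are handled directly: $K_2$ for $n=2$, a triangle with a pendant path for $n=3$ (Theorem~\ref{unicyclic}), and an explicit small graph for $n=4$. In every case $lmd(G)\le k$, and combining with the lower bound and $ldim_{ms}(G)\le lmd(G)$ (Observation~\ref{ineq}) gives $ldim_{ms}(G)=lmd(G)=\lceil\log_2 n\rceil$. The one genuinely delicate point — which I expect to be the main obstacle — is precisely this last verification: because a representation multiset can conflate two distinct distance vectors, the path lengths $L_j$ must be chosen with enough clearance between the intervals, and the few smallest graphs must be checked by hand rather than treated by the generic argument.
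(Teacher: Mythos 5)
Your proof is correct and takes essentially the same route as the paper's: the lower bound is the same pigeonhole count of the possible distance patterns of clique vertices over $W$ (your $\beta$-vector formulation with $a+2^{k-a}\le 2^k$ is a cleaner, case-free packaging of the paper's argument), and your sharpness construction --- a clique whose vertices encode distinct subsets of $[k]$ via attachments to $k$ pendant paths whose far endpoints form $W$, with lengths chosen so the distance intervals are disjoint --- is the same device the paper uses (there the paths of length $2j$ hang directly off clique vertices $u_j$, with $u_{j,1}$ joined to the clique according to the binary labels). The only loose end is the small cases: your generic construction genuinely requires $k\ge 3$, and while $n=2,3$ are disposed of, the promised explicit witness for $n=4$ is asserted rather than exhibited, whereas the paper's variant also covers $k=2$.
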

\begin{proof}
We shall prove the first inequality, and the second follows from Observation \ref{ineq}.

Let $K_n$ be a maximal clique of $G$. We are done for $n=2$ since $G$ contains $K_2$ and $ldim_{ms}(G)\geq 1$. For $n=3$ and $4$, it follows from Corrolary \ref{2}. 
For $n\geq 5$, suppose that there is a local outer multiset basis $W$ such that $|W|<\lceil\log_2 n\rceil$ or $n>2^{|W|}$. 


Let $W\cap V(K_n)=X$ and $Y=V(K_n)\setminus X$. Obviously, $|Y|=n-|X|\geq 2$. We shall count the number of possible representation multisets of vertices in $Y$ with respect to $W$. For any $y\in Y$, $m(y|W) = m(y|X) \bigcup m(y|W\setminus X) = \{1^{|X|}\} \bigcup m(y|W\setminus X)$. 

Now, consider an arbitrary vertex $w$ in $W\setminus X$. Since there are two possible values of $d_G(y,w)$ for every $y\in Y$, there are at most $2^{|W|-|X|}$ possible representation multisets of vertices in $Y$ with respect to $W\setminus X$, and thus, with respect to $W$. 

Consider two cases. First, when $X=\emptyset$. Since $n>2^{|W|}$ and at most $2^|W|$ possible representation multisets, there are two vertices in $Y$ with the same representation multisets with respect to $W$, a contradiction. The second case is when $X \neq \emptyset$. It can be verified that $2^{|W|}-|X|>2^{|W|-|X|}$ for all $|W|>|X|\geq 1$. Since $n>2^{|W|}$,  $n-|X|>2^{|W|-|X|}$. Consequently, there are two vertices in $Y$ with the same representation multisets with respect to $W$, another contradiction. Therefore, $ldim_{ms}(G)\geq \lceil \log_2\omega(G)\rceil$. 




Finally, to prove that the lower bounds are sharp, for $n=2$, choose $G=K_2$. For $n\geq 3$, we consider two cases: 

\textbf{Case 1 ($n=2^k$, for some $k$).} We construct a graph $G$ containing a clique $K_{2^k}$ with vertices $u_1, \ldots, u_k, v_0, v_1, \ldots, v_{2^k-k-1}$. For $1 \leq i \leq 2^k-k-2$, label the vertex $v_i$ with a $k$-vector $(a_{i,1}, a_{i,2}, \ldots, a_{i,k})$, as follows:
\[a_{i,j} = \left \{\begin{array}{cc}
   2j & {\rm if} \ i = j,\\
   2j+1 & {\rm if} \ i \neq j. 
\end{array}\right.\]
While $v_1$ is labeled with $(2, 4, \ldots, 2k)$ and $v_{2^k-k-1}$ with $(3, 5, \ldots, 2k+1)$.
To complete the construction of $G$, for $1 \leq j \leq k$, attach to each $u_j$ a path $P_j$ of length $2j$, and denote the vertices in $P_j$ by $u_j, u_{j,1}, u_{j,2}, \ldots, u_{j,2j}$. Lastly, add an edge between $u_{j,1}$ and $v_i$ whenever $a_{i,j}=2j$, for $1 \leq i \leq 2^k-k-1, 1 \leq j \leq k$. 

Choose $W=\{u_{1,2}, u_{2,4}, \ldots, u_{2,2k}\}$ and we will show that $W$ is a local multiset resolving set for $G$. 
Since $v_i$ is adjacent to $u_{j,1}$  whenever $a_{i,j}=2j$, then $m(v_i|W)=\{a_{i,1}, a_{i,2}, \ldots, a_{i,k}\}$, and so all $v_i$s have distinct representation multisets. On the other hand, for $1 \leq l,j \leq k$ the distance from $u_j$ to $u_{l,2l}$ is $2l$, if $j=l$, and $2l+1$, otherwise. This leads to the fact that all $u_j$s have distinct representation multisets. Since the entries of $m(v_0|W)$ are all even, the entries of $m(v_{2^k-k-1}|W)$ are all odd, the entries of $m(v_i|W), 1 \leq i \leq 2^k-k-2$ are all even except for one, and the entries of $m(u_i|W), 1 \leq i \leq k$, are all odd except for one, then all the vertices in the clique have distinct representation multisets.

Now, consider the vertices of the paths $P_j$ s. For $1 \leq j \leq k$, the distance from $u_{j,1}$ to $u_{l,2l}$ is $2l-1$, if $j=l$, and $2l+1$, otherwise. Thus, $m(u_{j,1}|W)$ contains all odd entries, while its $k+1$ neighbours in the clique always have an even entry in their representation multisets. As to the other pairs of adjacent vertices on the paths $ P_j$, it is obvious that they have distinct representation multisets.

Thus, $W$ is a local multiset resolving set and a local outer multiset resolving set for $G$. Combining with the result in the first part of this theorem, we obtain $lmd(G) = ldim_{ms}(G) = k = \big\lceil \log_2n\big\rceil$. 

An example of the construction for $k=3$ can be viewed in Figure \ref{gbclique}.

\begin{figure}[phtb]
\begin{center}
    \includegraphics[height=1.4in]{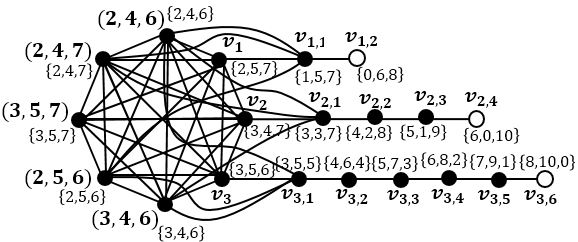}
    \caption{The graph $G$ with $\omega(G)=8$ and $lmd(G)=ldim_{ms}=3$.} \label{gbclique} 
\end{center}
\end{figure}

\textbf{Case 2 ($2^{k-1}<n<2^k$).} We construct a graph $G'$ from $G$ in Case 1 by deleting the vertices $v_1, \ldots, v_{2^k - n}$. Here, the set $W$ is also a local multiset resolving set and a local outer multiset resolving set for $G'$.
\end{proof}


\vspace{3mm}
The {\em chromatic number} $\chi(G)$ of a graph $G$ is the minimal number of colors needed to color the vertices in such a way that no two adjacent vertices have the same color.  The {\em diameter} of a graph $G$ is the maximum distance among all the distances of two vertices in $G$. The following two results show that lower bounds of the local (outer) multiset dimension do, in fact, depend on these two parameters.\\ 

\begin{theorem}\label{chromatic}
Let $G$ be a graph with diameter $d\geq 2$. If $g(d,\chi(G))$ is the smallest number $k$ such that $${k+d-1\choose d-1}+{k+d-2\choose d-1}-d+1\geq \chi(G),$$
then $lmd(G)\geq g(d,\chi(G))$ and $ldim_{ms}(G)\geq g(d,\chi(G))$. Moreover, these bounds are sharp.
\end{theorem}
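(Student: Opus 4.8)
\emph{Strategy.} Since $ldim_{ms}(G)\le lmd(G)$ by Observation \ref{ineq}, I would prove only $ldim_{ms}(G)\ge g(d,\chi(G))$; the bound for $lmd(G)$ then follows. Fix a local outer multiset basis $W$ and set $k:=|W|=ldim_{ms}(G)$; this is finite. As $\binom{k+d-1}{d-1}+\binom{k+d-2}{d-1}-d+1$ is increasing in $k$, it suffices to show
\[
\chi(G)\ \le\ \binom{k+d-1}{d-1}+\binom{k+d-2}{d-1}-d+1 ,
\]
because this forces $g(d,\chi(G))\le k$. So the plan is to extract a proper colouring of $G$ from $W$ and to count how few colours it can use.

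\emph{The colouring.} I would colour a vertex $u$ by the weakly increasing $k$-tuple $s(u)$ obtained by sorting $m(u|W)$. If $uv\in E(G)$, the triangle inequality gives $|d(u,w)-d(v,w)|\le 1$ for every $w\in W$, and since putting two vectors into increasing order cannot increase their $\ell_\infty$-distance, $\|s(u)-s(v)\|_\infty\le 1$. If moreover $u,v\notin W$, then $s(u)\ne s(v)$ because $W$ is outer resolving, so that edge is bichromatic. If $w\in W$ is adjacent to some $u\notin W$, then $d(u,w)=1$, so the smallest entry of $s(u)$ equals $1$ while $s(w)$ contains a $0$; hence that edge is bichromatic as well. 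Thus the only edges that could still be monochromatic lie inside $G[W]$.

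\emph{Counting, and absorbing $G[W]$.} A tuple $s(u)$ with $u\notin W$ has all entries in $\{1,\dots,d\}$, so it takes at most $\binom{k+d-1}{d-1}$ values, while a tuple $s(w)$ with $w\in W$ has one entry $0$ and the others in $\{1,\dots,d\}$, at most $\binom{k+d-2}{d-1}$ more; already this yields $\chi(G)\le\binom{k+d-1}{d-1}+\binom{k+d-2}{d-1}$. To recover the saved $d-1$ colours I would exploit the $\ell_\infty$-closeness above: two colour-tuples agreeing in every coordinate but the smallest can occur on adjacent vertices only when their smallest entries differ by exactly $1$, so replacing the smallest entry of every tuple by its parity keeps the colouring proper and collapses the palette to at most $2\binom{k+d-2}{d-1}$ colours. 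A computation with Pascal's rule then gives $2\binom{k+d-2}{d-1}\le\binom{k+d-1}{d-1}+\binom{k+d-2}{d-1}-(d-1)$, since this is equivalent to $\binom{k+d-2}{d-2}\ge d-1$. Finally, each vertex $w\in W$ has only "parity $1$" colours on its neighbours outside $W$, so it suffices to colour $G[W]$ properly using "parity $0$" colours, of which there remain $\binom{k+d-2}{d-1}\ge k\ge\chi(G[W])$; these fit in without enlarging the palette, and as such colours sit only on vertices at even distance from $W$ they cause no clash across $W$. Hence $\chi(G)\le 2\binom{k+d-2}{d-1}\le\binom{k+d-1}{d-1}+\binom{k+d-2}{d-1}-d+1$, as required.

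\emph{Sharpness, and the main difficulty.} For every $d$ we have $\binom{1+d-1}{d-1}+\binom{d-1}{d-1}-d+1=2$, so any non-bipartite graph with $ldim_{ms}=2$ (for example $C_5$, or a non-cycle unicyclic graph containing an odd cycle, cf.\ Theorem \ref{unicyclic}) meets the bound with equality; for larger target values of $g$ I would seek equality examples among complete multipartite graphs with a few private vertices attached (diameter $2$), or with pendant paths of increasing length as in the construction behind Theorem \ref{clique1}. The genuinely delicate points are the bookkeeping (via Pascal's rule) that confines the parity collapse inside the stated bound, and, in the outer version, checking that the vertices of $G[W]$ can always be absorbed into the shrunken palette; constructing sharpness examples beyond the easy $g=2$ case is the other part that needs care.
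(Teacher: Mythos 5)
Your proof is correct, but it follows a genuinely different route from the paper's. The paper colours each vertex directly by its representation multiset and simply counts how many multisets can occur: at most $\binom{k+d-1}{d-1}$ for vertices outside $W$, at most $\binom{k+d-2}{d-1}$ for vertices of $W$, and then recovers the extra $-(d-1)$ by the structural observation that among the multisets $\{0,1^{k-1}\},\{0,2^{k-1}\},\ldots,\{0,d^{k-1}\}$ at most one can actually be realised by a vertex of $W$ (if $w_i$ and $w_j$ both had all other basis vertices at a constant distance, those constants would both equal $d(w_i,w_j)$). Your parity-of-the-minimum collapse replaces that observation by the Pascal-rule slack $\binom{k+d-2}{d-2}\ge d-1$, and in fact proves the strictly stronger intermediate bound $\chi(G)\le 2\binom{k+d-2}{d-1}$, which beats the paper's count for every $d\ge 3$ and $k\ge 2$. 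More importantly, your explicit recolouring of $G[W]$ with parity-$0$ colours fixes a point the paper's proof glosses over: for a local \emph{outer} multiset basis, two adjacent vertices inside $W$ need not receive distinct representation multisets, so the paper's sentence ``every pair of adjacent vertices must have distinct representation multisets'' is not literally available in the $ldim_{ms}$ case, whereas your argument handles it (the neighbours of $W$ outside $W$ all have minimum distance $1$, hence parity $1$, so $W$ can be recoloured freely within the $\binom{k+d-2}{d-1}\ge k\ge\chi(G[W])$ parity-$0$ colours). On sharpness the paper is more modest than you need to be: it only exhibits equality for bipartite graphs ($\chi=2$, $g=1$), and leaves $\chi\ge 3$ as an open problem; your observation that a non-cycle unicyclic graph with an odd cycle gives $g(d,3)=2=lmd(G)=ldim_{ms}(G)$ is a valid equality instance as well (and in fact settles the paper's Open Problem 2 for $c=3$), so either example suffices for the ``moreover'' clause.
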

\begin{proof}
Let $W=\{w_1,\ldots,w_k\}$ be a local (outer) multiset basis for $G$. Consider all possible representation multisets of vertices in $G$. For $v\not\in W$, $m(v|W)=\{1^{m_1},\ldots,d^{m_d}\}$, where $m_1+\ldots+m_d=k$,  which leads to at most ${k+d-1 \choose d-1}$ possibilities. For $w_i\in W$, $m(w_i|W)=\{0,1^{n_1},\ldots,d^{n_{d}}\}$, where $n_1+\ldots+n_{d}=k-1$, and so there are at most ${k-1+d-1 \choose d-1}$ possibilities. However, it is impossible that all of $\{0,1^{k-1}\}$, $\{0,2^{k-1}\}$, $\ldots$, $\{0,d^{k-1}\}$ appear simultaneously (only at most one is allowed), and so, there are at most ${k+d-1\choose d-1}+{k+d-2\choose d-1}-d+1$ possible representation multisets of vertices in $G$. Since every pair of adjacent vertices must have distinct representation multisets, ${k+d-1\choose d-1}+{k+d-2\choose d-1}-d+1 \geq \chi(G)$. 

These bounds are sharp for all bipartite graphs (with chromatic number 2), as shown in Theorem \ref{1}.
\end{proof} 

\vspace{3mm}
The sharpness of the bound in Theorem \ref{chromatic} is only known for graphs with chromatic number 2, and thus the following question.\\

\begin{problem}
For any integer $c \geq 3$, is there a graph $G$ with chromatic number $\chi(G)=c$ such that $lmd(G)=g(d,c)$? $ldim_{ms}(G)=g(d,c)$?
\end{problem}

\vspace{3mm}
We obtain the following by applying Theorem \ref{chromatic} to graphs with a small diameter.\\

\begin{corollary}\label{diam2,3}
\begin{enumerate}
    \item If $G$ has diameter 2, then $lmd(G) \geq \frac{\chi(G)}{2}$ and $ldim_{ms}(G) \geq \frac{\chi(G)}{2}$.
    \item If $G$ has diameter 3, then $lmd(G)\geq\sqrt{\chi(G)+2}-1$ and $ldim_{ms}(G) \geq\sqrt{\chi(G)+2}-1$.
\end{enumerate}
\end{corollary}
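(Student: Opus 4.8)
Both parts are immediate specializations of Theorem~\ref{chromatic}: the plan is to substitute $d=2$ and then $d=3$ into the inequality
\[
{k+d-1\choose d-1}+{k+d-2\choose d-1}-d+1\geq \chi(G)
\]
that defines $g(d,\chi(G))$, simplify the binomial coefficients, and solve the resulting polynomial inequality for $k$ to read off $g(d,\chi(G))$ explicitly.

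For part~(1) I would set $d=2$, so that $\binom{d-1}{\cdot}=\binom{1}{\cdot}$ and the two binomials become $\binom{k+1}{1}=k+1$ and $\binom{k}{1}=k$; the $-d+1$ term is $-1$. The defining inequality collapses to $(k+1)+k-1\geq\chi(G)$, i.e.\ $2k\geq\chi(G)$. Hence $g(2,\chi(G))=\lceil \chi(G)/2\rceil\geq \chi(G)/2$, and Theorem~\ref{chromatic} gives $lmd(G)\geq\chi(G)/2$ and $ldim_{ms}(G)\geq\chi(G)/2$.

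For part~(2) I would set $d=3$, so the binomials become $\binom{k+2}{2}$ and $\binom{k+1}{2}$ and the correction term is $-d+1=-2$. The one algebraic identity to verify carefully is
\[
{k+2\choose 2}+{k+1\choose 2}=\frac{(k+2)(k+1)+(k+1)k}{2}=\frac{(k+1)(2k+2)}{2}=(k+1)^2,
\]
i.e.\ the sum of two consecutive triangular numbers is a perfect square. With this, the defining inequality becomes $(k+1)^2-2\geq\chi(G)$, hence $k+1\geq\sqrt{\chi(G)+2}$, so $g(3,\chi(G))=\big\lceil\sqrt{\chi(G)+2}-1\big\rceil\geq\sqrt{\chi(G)+2}-1$, and again Theorem~\ref{chromatic} yields the claimed bounds for both dimensions.

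There is no substantive obstacle here; the only points requiring care are the bookkeeping of the $-d+1$ correction term and the passage from the integer quantity $g(d,\chi(G))$ (which is a ceiling) to the weaker real-valued bounds $\chi(G)/2$ and $\sqrt{\chi(G)+2}-1$ stated in the corollary, which is justified simply because $\lceil x\rceil\geq x$.
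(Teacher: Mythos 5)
Your proposal is correct and is exactly the intended derivation: the paper states Corollary~\ref{diam2,3} as a direct specialization of Theorem~\ref{chromatic}, and your substitutions $d=2$ (giving $2k\geq\chi(G)$) and $d=3$ (giving $(k+1)^2-2\geq\chi(G)$ via the triangular-number identity) are the right computations. The passage from the ceiling $g(d,\chi(G))$ to the stated real-valued bounds is handled correctly as well.
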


\vspace{3mm}
We conclude this section by presenting an upper bound for the local (outer) multiset dimension of graphs. Here, a subgraph $H\subseteq G$ is called a {\em maximal subgraph with property $P$} if $H+e$ is not $P$ for any edge $e$ in $G$.\\ 


\begin{theorem}\label{maxsubgraph}
Let $H$ be a maximal subgraph of $G$ without a leaf. 
\begin{enumerate}
    \item If the local multiset dimension of $H$ is finite, then $lmd(G) \leq lmd(H)$, and
    \item $ldim_{ms}(G) \leq ldim_{ms}(H)$.
\end{enumerate}
Moreover, these bounds are sharp.
\end{theorem}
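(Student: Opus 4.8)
The plan is to first extract the structural meaning of the hypothesis. Since $H$ is a maximal leaf-free subgraph of $G$, the graph $G$ is obtained from $H$ by attaching pendant trees rooted at vertices of $H$: indeed, any edge of $G$ with both ends in $V(H)$ must already lie in $E(H)$, the subgraph $G[V(G)\setminus V(H)]$ must be a forest, and each of its components must be joined to $V(H)$ by exactly one edge of $G$ — in each case a violation would let us enlarge $H$ (by the missing edge, by a cycle, or by a path between two attachment points together with the two joining edges) without creating a leaf. From this picture I would record the only two facts the rest of the proof uses. First, $d_G(u,v)=d_H(u,v)$ for all $u,v\in V(H)$: a $u$--$v$ walk that leaves $V(H)$ must return through the single vertex by which that pendant tree is attached, which costs at least two extra steps. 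Second, every $x\in V(G)$ has a well-defined root $\rho(x)\in V(H)$ and depth $\delta(x):=d_G(x,\rho(x))$, with $\delta(x)=0$ precisely when $x\in V(H)$, and $d_G(x,w)=\delta(x)+d_H(\rho(x),w)$ for every $w\in V(H)$; moreover, two adjacent vertices of $G$ not both in $V(H)$ share a root and have depths differing by $1$. (If $G$ is a tree then $H$ is edgeless and the statement degenerates, so we assume $G$ is not a tree; then $H$ contains a cycle and, by Observation \ref{ineq}, $lmd(H),ldim_{ms}(H)\ge 1$.)

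I would then handle both parts together. Let $W\subseteq V(H)$ be a local multiset basis of $H$ for part (1) — finite by hypothesis — or a local outer multiset basis of $H$ for part (2), and note $|W|\ge 1$. Writing $m_G$ and $m_H$ for representation multisets taken in $G$ and in $H$, the second structural fact yields the single identity $m_G(x|W)=m_H(\rho(x)|W)+\delta(x)$ for all $x$, where the $+\delta(x)$ adds that constant to every entry. Now consider an adjacent pair $xy$ in $G$ (with $x,y\notin W$ in case (2)). If $x,y\in V(H)$, then $xy\in E(H)$, so $m_H(x|W)\ne m_H(y|W)$ because $W$ is a local (outer) multiset basis of $H$, and hence $m_G(x|W)\ne m_G(y|W)$ by the first structural fact. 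If $x,y$ are not both in $V(H)$, then they occur at depths $j$ and $j+1$ below a common root $r$, so their representations are $m_H(r|W)+j$ and $m_H(r|W)+(j+1)$; these multisets have the same size $|W|$ but entry sums differing by $|W|\ge 1$, so they are distinct. Hence $W$ is a local multiset resolving set (resp.\ local outer multiset resolving set) of $G$, giving $lmd(G)\le|W|=lmd(H)$ (resp.\ $ldim_{ms}(G)\le|W|=ldim_{ms}(H)$).

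For sharpness, whenever $G$ itself has no leaf we have $H=G$ and both inequalities become equalities; for a genuinely smaller $H$, take $G$ to be $C_4$ with one pendant vertex, so $H=C_4$ and, both graphs being bipartite and nontrivial, Theorem \ref{1} gives $lmd(H)=ldim_{ms}(H)=1=lmd(G)=ldim_{ms}(G)$.

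The only real obstacle is the structural step: the whole argument rests on converting "maximal leaf-free subgraph" into the pendant-tree description, and in particular into the distance identity $d_G(x,w)=\delta(x)+d_H(\rho(x),w)$. Granting that, the rest is the elementary remark that shifting a size-$|W|$ multiset of distances by a nonzero constant changes its entry sum, so that consecutive-depth neighbours are automatically separated; the only hypothesis this needs is $|W|\ge 1$, which is immediate from Observation \ref{ineq}.
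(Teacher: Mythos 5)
Your proof is correct and follows essentially the same route as the paper's: you establish that the components of $G-V(H)$ are pendant trees attached to $H$ by exactly one edge (the paper's Claims 1 and 2), and then show that a local (outer) multiset basis $W$ of $H$ also resolves $G$ because representations of pendant-tree vertices are constant shifts of their root's representation. Your write-up is even slightly more explicit than the paper's on points it glosses over (edges of $G$ inside $V(H)$, preservation of distances, and why a shift by one distinguishes multisets when $|W|\ge 1$), and your sharpness examples differ only cosmetically from the paper's $H\odot K_2$ construction.
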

\begin{proof}
If $G$ has no leaves, then $H=G$, and we are done since $lmd(G) = lmd(H)$ and $ldim_{ms}(G) = ldim_{ms}(H)$. Thus, we assume that $G$ has some leaves.

{\bf Claim 1.} All components in $G-H$ are trees.

Assume that there is a nontree component $F$ of $G-H$, which means that $F$ contains a cycle $C$. Let $\mathcal{P}$ be the set of all shortest paths, where each of them has one end in $V(H)$ and the other in $V(C)$. Since $G$ is connected, $\mathcal{P}$ is not empty. As a consequence, $H\cup {\mathcal{P}}\cup C$ is a subgraph of $G$ without a leaf, a contradiction to the maximality of $H$. \qed 

For $A,B\subseteq V(G)$, we denote by $E(A,B)=\{ab\in E(G)|a\in A,b\in B\}$. As $G$ is connected, this set is not empty.

{\bf Claim 2. For every tree component $T$ of $G-H$, $|E(V(H),V(T))|=1$.}

Suppose that there exists a component $T$ of $G-H$, such that $|E(V(H),V(T))| \geq 2$. Consider the following two cases:

\textit{Case 1.} There are two distinct vertices $t_1,t_2$ in $T$ having neighbours in $H$. By the connectivity of $T$, there exists a unique path connecting $t_1$ and $t_2$, say $_{t_1}P_{t_2}$. This implies that the subgraph $H \bigcup E(V(H),\{t_1,t_2\}) \bigcup$ 
 $_{t_1}P_{t_2}$ has no leaf, that contradicts the maximality of $H$.

\textit{Case 2.} There is exactly one vertex $t$ in $T$ adjacent to at least two distinct vertices in $H$. Again, we obtain a contradiction due to $H \cup E(V(H),\{t\})$ is a subgraph without a leaf. \qed

Now, let $T_1, T_2, \ldots, T_k$ be all the tree components of $G-H$, in which $t_i$ is the vertex in $T_i$ adjacent to the vertex $h_i$ in $H$. Since the local multiset dimension of $H$ is finite, let $W$ be a local multiset basis of $H$. This means that $m(u|W) \neq m(v|W)$ for every pair of adjacent vertices $u$ and $v$ in $H$. 
Let $x,y$ be two adjacent vertices in $T_i$. Since $T_i$ is a tree, one of them, say $x$, should be on the path connecting $t_i$ to the other vertex, here $y$. Thus, $d_{T_i}(y,t_i) = d_{T_i}(x,t_i)+1$. Since $m(t_i|W) = m(h_i|W)+1, 1 \leq i \leq k$, then   
$m(x|W) \neq m(y|W)$, and $W$ is also a local resolving set for $G$.

A similar argument holds when we consider $W$ a local outer multiset basis of $H$.

The sharpness of the bounds follows if we consider $G \equiv H \odot K_2$. It is clear that $H$ is the maximal subgraph of $G$ without a leaf. For every leaf $y$ in $G$ and every vertex $x$ in $H$, $d_G(y,x)=d_H(y',x)+1$, where $y'$ is the neighbor of $y$ in $G$. Thus, the local (outer) multiset basis of $H$ is also the local (outer) multiset basis of $G$.
\end{proof}

\section{Local (outer) multiset dimension of graphs with small diameter}
\label{smalldiam}

In this last section, we consider graphs of diameter at most two. By Theorem \ref{infmd}, these graphs have infinite multiset dimension. If the graphs are regular, they have an outer multiset dimension one less than their order (Theorem \ref{dmsn-1}).\\ 

For graphs of diameter one, the complete graphs, it has been proved that $lmd(K_2)=1$ and $lmd(K_n)=\infty, n \geq 3$ \cite{ADKA19}. By Theorem \ref{dmsn-1}, for $n\geq 2, ldim_{ms}(K_n)=dim_{ms}(K_n)=n-1$.\\

We have determined the local (outer) multiset dimension of two families of graphs with diameter two in Corollaries \ref{amal} and \ref{edgeamal}, and we can see that the majority of graphs in the two families still have infinite local multiset dimension. In the following, we study a family of graphs with diameter two; half of which still admit infinite local multiset dimension.\\  

Let $W_n$ be the wheel on $n+1$ vertices. We denote by $v_p$ the center of $W_n$ 
and by $W'_n = W_n[V(W_n) \setminus \{v_p\}]$ the induced cycle on $n$ vertices in $W_n$. The local metric dimension of the wheels was determined in \cite{FC21} as follows, and, by Observation \ref{ineq}, they serve as lower bounds for the local multiset dimension of the wheels.\\

\begin{theorem} \cite{FC21} \label{ldimWn}
The local metric dimension of wheels are $ldim(W_3)=3$, $ldim(W_4)=2$, and $ldim(W_n)=\lceil\frac{n}{4}\rceil$, for $n\geq 5$.    
\end{theorem}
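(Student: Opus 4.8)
The plan is to treat $W_3$, $W_4$, and the range $n\geq 5$ separately, since the first two values do not obey the stated formula. For $W_3=K_4$, every two vertices are adjacent and the diameter is $1$, so any $W$ with $|W|\leq 2$ leaves two vertices $x,y\notin W$ with $r(x|W)=(1,\dots,1)=r(y|W)$; hence $ldim(W_3)\geq 3$, while any three vertices of $K_4$ form a local resolving set, so $ldim(W_3)=3$. For $W_4$ one checks that no single landmark works: if $W=\{v_p\}$ then two adjacent rim vertices $v_1,v_2$ satisfy $r(v_1|W)=r(v_2|W)=(1)$, and if $W=\{v_1\}$ (a rim vertex) then $r(v_p|W)=r(v_2|W)=(1)$ with $v_p$ adjacent to $v_2$; thus $ldim(W_4)\geq 2$, and a short computation shows that two consecutive rim vertices form a local resolving set, so $ldim(W_4)=2$.

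For $n\geq 5$ I would first record the distances in $W_n$: the diameter is $2$, $d(v_p,x)=1$ for every $x\neq v_p$, and for rim vertices $v_i\neq v_j$ (labelled cyclically along $W'_n$) one has $d(v_i,v_j)=1$ when $i\equiv j\pm 1\pmod n$ and $d(v_i,v_j)=2$ otherwise. The crucial step, which I would isolate as a lemma, is: the rim edge $v_jv_{j+1}$ is resolved by a set $W$ if and only if $W$ contains a rim vertex $v_k$ with $k\in\{j-1,j,j+1,j+2\}\pmod n$. Indeed the hub $v_p$ is equidistant from $v_j$ and $v_{j+1}$, so it resolves no rim edge; and (this is where $n\geq 5$ enters) a rim landmark $v_k$ gives distinct distances to $v_j$ and $v_{j+1}$ exactly for those four values of $k$. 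Consequently each rim vertex resolves precisely four of the $n$ rim edges while the hub resolves none, so any local resolving set $W$ must satisfy $4\,|W\cap V(W'_n)|\geq n$, whence $ldim(W_n)\geq\lceil n/4\rceil$.

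For the matching upper bound, when $n\geq 9$ (so $\lceil n/4\rceil\geq 3$) I would take $W=\{\,v_{4j+1}:0\leq j\leq\lceil n/4\rceil-1\,\}$. The four-edge ``windows'' of these vertices cover every rim edge: the windows of $v_5,v_9,\dots$ form a consecutive arc that already contains all rim edges except $v_{n-1}v_n$, $v_nv_1$, $v_1v_2$, $v_2v_3$, and these four are supplied by the window of $v_1$; hence all rim edges are resolved. Moreover $|W\cap V(W'_n)|\geq 3>2$, so no rim vertex has both of its two rim-neighbours inside $W$, and therefore every spoke $v_pv_i$ is also resolved. For the remaining cases $5\leq n\leq 8$ (where $\lceil n/4\rceil=2$) I would take $W=\{v_1,v_5\}$ when $n\in\{5,7,8\}$ and $W=\{v_1,v_4\}$ when $n=6$, checking directly that the two windows cover all rim edges and that the two chosen rim vertices are not simultaneously the two rim-neighbours of a common vertex (their cyclic distance is never $2$), so the spokes are resolved too. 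Together with the lower bound this yields $ldim(W_n)=\lceil n/4\rceil$ for $n\geq 5$.

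The conceptual heart, and the main place to be careful, is the lemma identifying exactly which edges a single landmark resolves, combined with the observation that the hub contributes nothing toward resolving rim edges yet still imposes the side condition that no two chosen rim vertices may be the two neighbours of a common rim vertex. The only technically delicate point is the upper-bound construction for the few small values $5\leq n\leq 8$ --- in particular $n=6$, where the pair $\{v_1,v_5\}$ produced by the general recipe is exactly the neighbour set of $v_6$ and must be replaced by the antipodal pair $\{v_1,v_4\}$. Everything else reduces to routine distance bookkeeping in the wheel.
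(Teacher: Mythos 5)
This theorem is quoted in the paper from \cite{FC21} without proof, so there is no internal argument to compare yours against; judged on its own terms, your proof is essentially correct and complete. The key window lemma (for $n\geq 5$ a rim edge $v_jv_{j+1}$ is resolved precisely by landmarks $v_k$ with $k\in\{j-1,j,j+1,j+2\}\pmod n$, and never by the hub), the resulting counting bound $4\,|W\cap V(W'_n)|\geq n$ giving $ldim(W_n)\geq\lceil n/4\rceil$, the explicit sets for $5\leq n\leq 8$ (including the necessary switch to $\{v_1,v_4\}$ at $n=6$), the arc-covering construction for $n\geq 9$, and the separate treatments of $W_3=K_4$ and $W_4$ all check out. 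One justification needs repair: for $n\geq 9$ you assert that since $|W\cap V(W'_n)|\geq 3$, ``no rim vertex has both of its two rim-neighbours inside $W$.'' That does not follow from $|W|\geq 3$ and is in fact false for your own construction when $n\equiv 2\pmod 4$; for $n=10$ and $W=\{v_1,v_5,v_9\}$ the vertex $v_{10}$ has both rim-neighbours in $W$. The conclusion you need (every spoke is resolved) still holds, but for a different reason: a rim vertex $v_i\notin W$ has only two rim neighbours, so once $W$ consists of at least three rim vertices some $w\in W$ is at distance $2$ from $v_i$, whereas every coordinate of $r(v_p|W)$ equals $1$; hence $r(v_i|W)\neq r(v_p|W)$ and the spoke $v_pv_i$ is resolved. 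With that one sentence corrected, your argument is sound.
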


\vspace{3mm}
The following lemma is needed to construct a local multiset basis of a wheel.\\

\begin{lemma} \label{1or3}
Let $W$ be a local multiset resolving set of $W_n$. Then
\begin{enumerate}
    \item If $W'_n[W]$ contains a path $P_m$, then either $m=1$ or $m=3$.
    \item If $W'_n[W_n \setminus W]$ contains a path $P_m$, then either $m=1$ or $m=3$.
\end{enumerate}
\end{lemma}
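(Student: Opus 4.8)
The plan is to analyze the possible representation multisets in a wheel $W_n$ with respect to a candidate local resolving set $W$, exploiting the fact that $W_n$ has diameter $2$, so every entry of every representation multiset is in $\{0,1,2\}$. I would first record the distance structure: for a vertex $w$ on the rim $W'_n$, a rim vertex $x$ satisfies $d(x,w)\in\{0,1,2\}$, with $d(x,w)=1$ iff $x$ is one of the two rim-neighbours of $w$ (or $x$ coincides with an adjacent copy), and $d(x,w)=2$ otherwise; and $d(v_p,w)=1$ for the center. The key observation is that a vertex $x$'s multiset $m(x\mid W)$ is determined by: whether $x\in W$ (contributing a $0$), whether the center is in $W$ (contributing a $1$ for every rim vertex $x$), and — the only ``local'' information — the number of elements of $W$ at distance $1$ versus $2$ from $x$ along the rim.

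**The two symmetric cases.**

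Since the two statements are duals (swap the roles of ``in $W$'' and ``not in $W$''; note $d$-structure within the rim is symmetric under this swap up to the fixed center contribution), I would prove part (1) in detail and then indicate that part (2) follows by the same counting with $W$ replaced by $W_n\setminus W$. For part (1): suppose $W'_n[W]$ contains a path $P_m=x_1x_2\cdots x_m$ of rim vertices in $W$ that is maximal (so $x_1$'s other rim-neighbour and $x_m$'s other rim-neighbour are not in $W$). I would then look at two adjacent vertices that are ``deep inside'' this path and show their multisets coincide unless the path is short. Specifically, for a rim vertex $x$, the contribution of $W\cap V(W'_n)$ to $m(x\mid W)$ depends only on how many of $x$'s rim-neighbours lie in $W$: that count is $0$, $1$, or $2$, and it contributes that many $1$'s, with the remaining rim-elements of $W$ contributing $2$'s, plus a $0$ if $x\in W$ and a $1$ if $v_p\in W$. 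Thus two adjacent rim vertices $x_i,x_{i+1}$ (both in $W$, i.e. $2\le i$, $i+1\le m-1$, both with exactly two rim-neighbours in $W$) have the same number of $1$'s and $2$'s from the rim, the same $0$, the same center contribution — hence $m(x_i\mid W)=m(x_{i+1}\mid W)$, contradicting that $W$ resolves. This forces any such ``interior adjacent pair inside $W$'' not to exist, i.e. $m\le 3$ (a $P_4$ would contain the adjacent interior pair $x_2x_3$).

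**Eliminating $m=2$.**

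The remaining work is to rule out $m=2$, i.e. a maximal $P_2=x_1x_2$ in $W'_n[W]$. Here $x_1$ has exactly one rim-neighbour ($x_2$) in $W$ and $x_2$ has exactly one rim-neighbour ($x_1$) in $W$; both are in $W$, both see the center the same way. So $m(x_1\mid W)$ and $m(x_2\mid W)$ again agree: each is $\{0\}\cup\{1\}\cup\{2^{\,|W\cap V(W'_n)|-1}\}\cup(\{1\}$ if $v_p\in W)$. Hence $m=2$ is also impossible, leaving only $m=1$ or $m=3$. For $m=3$ one should double-check consistency (the statement only asserts these are the only possibilities, not that $P_3$ always occurs), so no further argument is needed there.

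**Main obstacle.**

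The main subtlety — and the step I would be most careful about — is the small-$n$ edge effects: when $n$ is small, ``a path $P_m$ on the rim'' can wrap around or the two ends of the path can be adjacent to each other (e.g. $m=n$ or $m=n-1$), so the notion of a ``maximal'' path and of ``the other rim-neighbour not being in $W$'' degenerates. I would handle this by stating the lemma's content for the generic situation and separately checking $n\le 6$ (or wherever the wrap-around interferes), or by phrasing the interior-pair argument purely in terms of rim-neighbour counts, which remains valid as long as the claimed $P_m$ is an induced subpath with $m+1\le n$ so that its two endpoints genuinely have an ``outside'' rim-neighbour. The counting itself (matching numbers of $1$'s and $2$'s) is routine once the distance structure is pinned down; the bookkeeping of which vertices lie in $W$ and how the center contributes is where an off-by-one slip would hide.
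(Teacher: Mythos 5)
Your proposal is correct and follows essentially the same route as the paper's proof: using the diameter-2 distance structure of $W_n$, you show that the two vertices of a (maximal) $P_2$ in $W'_n[W]$, or an interior adjacent pair of a $P_m$ with $m\geq 4$, receive identical representation multisets (split according to whether $v_p\in W$), and part (2) is the symmetric count for $W_n\setminus W$. Your added attention to maximality of the path and to wrap-around for small $n$ is a sound refinement of a point the paper leaves implicit, but it does not change the argument.
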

\begin{proof} Let $|W|=k$.
\begin{enumerate}
\item Suppose that $W'_n[W]$ contains a path $P_2$, then the two adjacent vertices in $P_2$ have the same representation multisets: either $\{0, 1, 2^{k-2}\}$ (if $v_p \notin W$)  or $\{0, 1^2, 2^{k-3}\}$ (if $v_p \in W$). Now, suppose that $W'_n[W]$ contains a path $P_m$ with $m\geq 4$. Consider a pair of adjacent vertices that are not end-vertices in $P_m$, say $x$ and $y$. Then $x$ and $y$ have the same representation multisets: either $\{0,1^2,2^{t-3}\}$ (if $v_p \notin W$) or $\{0,1^3,2^{t-4}\}$ (if $v_p \in W$). 
\item Suppose that $W'_n[W_n \setminus W]$ contains a path $P_2$, then the two adjacent vertices in $P_2$ have the same representation multisets: either $\{1, 2^{k-1}\}$ (if $v_p \notin W$)  or $\{1^2, 2^{k-2}\}$ (if $v_p \in W$). Now, suppose that $W'_n[W_n \setminus W]$ contains a path $P_m$ with $m\geq 4$. Consider a pair of adjacent vertices that are not end-vertices in $P_m$, say $x$ and $y$. Then $x$ and $y$ have the same representation multisets: either $\{2^{k}\}$ (if $v_p \notin W$) or $\{1,2^{k-1}\}$ (if $v_p \in W$). 
\end{enumerate}
\end{proof}

\vspace{3mm}
By reasoning similar to the proof of Lemma \ref{1or3}, we obtain the following property for a local outer multiset resolving set of a wheel.\\  

\begin{lemma} \label{out1or3}
Let $W$ be a local outer multiset resolving set of $W_n$. If $W'_n[W]$ contains a path $P_m$, then either $m=1$ or $m=3$.
\end{lemma}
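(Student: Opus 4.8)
The plan is to follow the proof of Lemma~\ref{1or3} as closely as possible, exploiting that $W_n$ has diameter at most $2$, so every entry of a representation multiset lies in $\{0,1,2\}$. Set $k=|W|$. The first step is to record how little information a rim vertex's representation can carry: for a rim vertex $x\notin W$, writing $a(x)\in\{0,1,2\}$ for the number of the two rim-neighbours of $x$ lying in $W$, the multiset $m(x\mid W)$ consists of $a(x)$ ones, one further $1$ when $v_p\in W$, and all remaining entries equal to $2$. Thus $m(x\mid W)$ is completely determined by $a(x)$ (for fixed $W$), and two adjacent rim vertices lying outside $W$ receive equal representations exactly when their $a$-values agree. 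The whole argument therefore reduces to locating such an adjacent pair of outside-$W$ vertices whenever a component of $W'_n[W]$ is a path $P_m$ with $m=2$ or $m\ge 4$.

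As in Lemma~\ref{1or3}, I would then split into the cases $v_p\notin W$ and $v_p\in W$; the hub only adds a uniform extra $1$ to every rim representation, so it shifts all the relevant multisets by a constant and never merges two that were previously distinct. In each case I would compute the handful of explicit multisets that occur at and beside a path component of $W'_n[W]$, namely those of the form $\{2^{k}\}$, $\{1,2^{k-1}\}$, $\{1^2,2^{k-2}\}$ and their hub-shifted versions, so that the comparisons become transparent and the collisions can be read off directly.

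For the core step, suppose some component of $W'_n[W]$ is a path $P_m$ with $m=2$ or $m\ge 4$, with its vertices listed as consecutive rim vertices and with outside-$W$ neighbours flanking the arc on either side. I would examine the two maximal runs of outside-$W$ vertices that abut the arc and track the $a$-values they realise, aiming to show that a forbidden arc length forces two adjacent outside-$W$ vertices to share the same $a$-value, hence the same representation, contradicting that $W$ is a local outer multiset resolving set. The admissible lengths $m=1$ and $m=3$ should be precisely those for which this forced coincidence does not arise, which is exactly what the lemma asserts.

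The main obstacle, and the genuine difference from Lemma~\ref{1or3}, is that in the outer setting the vertices of the arc itself lie in $W$ and are never tested, so the colliding pair cannot be taken inside the arc as it was there; it must instead be manufactured among the outside-$W$ vertices adjacent to the arc. Making this transfer rigorous means controlling the lengths of the non-$W$ runs flanking the arc and verifying, uniformly over the two hub cases and over every such configuration, that an arc of length $2$ or $\ge 4$ really does force an adjacent outside-$W$ pair of equal $a$-value. Pinning down this coincidence in every configuration is the delicate point, and is where I expect the argument to demand the most care.
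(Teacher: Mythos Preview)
Your instinct that the argument from Lemma~\ref{1or3}(1) does not carry over to the outer setting is correct---but the right conclusion is not that the proof becomes ``delicate''; it is that the statement \emph{as printed} is false. Take $W_{13}$ with rim $v_1,\dots,v_{13}$ and $W=\{v_1,v_2,v_6,v_{10}\}$. Then $W'_n[W]$ contains the path $v_1v_2$, a $P_2$. The complement $W'_n[V(W_n)\setminus W]$ decomposes into the three $P_3$'s $v_3v_4v_5$, $v_7v_8v_9$, $v_{11}v_{12}v_{13}$, and in each of these the $a$-values (in your notation) run $1,0,1$. Hence every adjacent pair of rim vertices outside $W$ has distinct representations, and $v_p$ (with representation $\{1^4\}$) is distinguished from all of them. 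So $W$ \emph{is} a local outer multiset resolving set whose $W'_n[W]$ contains a $P_2$, contradicting the lemma as stated. Your proposed search for a forced collision ``among the outside-$W$ vertices adjacent to the arc'' therefore cannot succeed in general.

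The paper evidently intends the lemma to concern $W'_n[V(W_n)\setminus W]$ rather than $W'_n[W]$: this is how it is actually invoked in the proof of Theorem~\ref{wheel} (both appeals there are to paths of order $2$ or $4$ in $W'_n[W_n\setminus W]$), and it is the only reading under which ``reasoning similar to the proof of Lemma~\ref{1or3}'' makes sense. With that correction the proof is simply part~(2) of Lemma~\ref{1or3} verbatim: the two adjacent vertices with equal representation produced there already lie outside $W$, so the same contradiction applies to a local \emph{outer} multiset resolving set. No new idea is needed, and your detour through the flanking runs becomes unnecessary.
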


\vspace{3mm}
Now, we are ready to determine the local (outer) multiset dimensions of wheels.\\

\begin{theorem} \label{wheel} For $n \geq 3$,
$$lmd(W_n)=\left \{ \begin{array}{cc}
3, \ {\rm if}\ n=4,6,\\
\lceil\frac{n}{4}\rceil, \ {\rm if}\ n\geq 8 \ {\rm even},\\
\infty, \ {\rm otherwise,}
\end{array}
\right.$$
and
$$ldim_{ms}(W_n)=\left \{ \begin{array}{cc}
3, \ {\rm if}\ n=3,4,6,\\
\lceil\frac{n}{4}\rceil, \ {\rm if}\ n\geq 8 \ {\rm even \ or} \ n \equiv 1 \mod 4,\\
\lceil\frac{n}{4}\rceil + 1, \ {\rm otherwise}.
\end{array}
\right.$$
\end{theorem}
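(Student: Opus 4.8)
I would prove the two formulas in parallel, splitting into the cases $n\in\{3,4,6\}$, $n\ge 8$ even, and $n$ odd (subdivided into $n\equiv 1\pmod 4$ and $n\equiv 3\pmod 4$), since the small cases are essentially finite checks and the bulk of the work is the generic even/odd analysis. Throughout, the lower bounds come from Theorem~\ref{ldimWn} via Observation~\ref{ineq} (so $lmd(W_n),ldim_{ms}(W_n)\ge \lceil n/4\rceil$ for $n\ge5$, and $\ge 3$, $\ge 2$ for $n=3,4$ respectively), and from Lemmas~\ref{1or3} and~\ref{out1or3}, which are the real engine: they say that in any local (outer) multiset resolving set $W$, the traces $W'_n[W]$ and $W'_n[W_n\setminus W]$ on the rim cycle can only consist of isolated vertices and paths on exactly three vertices — never $P_2$ and never $P_m$ with $m\ge4$. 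Combining ``no $P_2$ and no $P_{\ge4}$ in $W$'' with ``no $P_2$ and no $P_{\ge4}$ outside $W$'' forces a rigid pattern around the rim, and this is where both the improved lower bounds and the constructions will come from.

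**Even case, $n\ge 8$.** For the upper bound I would exhibit an explicit $W$ of size $\lceil n/4\rceil$: choose blocks of the rim of the form (three consecutive vertices in, then gap) or, when $4\nmid n$, adjust one block, always keeping $v_p\notin W$, so that $W'_n[W]$ is a disjoint union of $P_3$'s (and at most one $P_1$) and $W'_n[W_n\setminus W]$ is a disjoint union of $P_1$'s (and possibly one $P_3$) — i.e. the pattern $\ldots\,\bullet\bullet\bullet\,\circ\,\ldots$ repeated. Then I verify directly that every edge of $W_n$ is resolved: spokes $v_pv_i$ are resolved because $v_p$ has a $0$ in its representation and no rim vertex does; rim edges inside a $P_3$ block are resolved by distance-counting ($0$'s and $1$'s near the ends versus the middle), rim edges straddling a block boundary are resolved because one endpoint is in $W$ (has a $0$) and the other is not, and the ``$\circ\bullet\bullet\bullet\circ$'' geometry makes the multisets of $1$'s and $2$'s differ. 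The combinatorial check that $\lceil n/4\rceil$ blocks of pattern-length $4$ tile $C_n$ with only a bounded, harmless remainder is routine once the pattern is fixed. For the lower bound, $\ge\lceil n/4\rceil$ is already from Theorem~\ref{ldimWn}; I also note that in the even case no penalty is incurred (unlike the odd case below).

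**Odd case.** Here $ldim(W_n)=\lceil n/4\rceil$ still, but the parity of $n$ obstructs a resolving set of that exact size, so I must prove $lmd(W_n)=\infty$ for all odd $n\ge 3$ and pin down $ldim_{ms}(W_n)$. For $lmd=\infty$: Lemma~\ref{1or3} forces both $W$ and its complement on the rim to be unions of isolated vertices and $P_3$'s; I would show this forces the cyclic pattern to have period $4$ (each $P_3$ must be followed by exactly one gap vertex, and each gap must be a single vertex, because two consecutive gaps would be a $P_2$ outside $W$ and two consecutive $P_3$-blocks would create a $P_{\ge4}$ inside $W$), but a period-$4$ pattern cannot close up on a cycle of odd length — contradiction — so no finite local multiset resolving set exists. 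For $ldim_{ms}$, the complement needs only be checked for \emph{adjacent} pairs outside $W$, which relaxes Lemma~\ref{1or3}(2) to Lemma~\ref{out1or3}: now $W'_n[W]$ must still be isolated vertices and $P_3$'s, but the complement is unconstrained. When $n\equiv 1\pmod 4$ I would show $\lceil n/4\rceil$ $P_3$-blocks plus gaps (with a slightly longer gap absorbing the $+1$ residue, and verifying the long gap's adjacent pairs are still distinguished, typically by also putting $v_p$ or shifting one block) suffices; when $n\equiv 3\pmod 4$ I would show that no arrangement of $\lceil n/4\rceil$ vertices works — essentially, the residue $3$ forces either a $P_2$ in $W$ or an unresolved adjacent pair in the oversized complementary gap — so one extra vertex is needed, giving $\lceil n/4\rceil+1$, with a matching construction (add $v_p$, or an extra isolated rim vertex, to fix the last gap).

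**Main obstacle.** The delicate point is the lower bound $ldim_{ms}(W_n)\ge\lceil n/4\rceil+1$ for $n\equiv 3\pmod 4$: I must rule out \emph{every} rim configuration of size $\lceil n/4\rceil$, including those using $v_p$. The argument will be a careful case analysis on whether $v_p\in W$ and on the block/gap structure forced by Lemma~\ref{out1or3}, showing that the total ``length budget'' — each $P_3$ block plus its mandatory single following gap uses $4$ rim positions, and a lone $W$-vertex plus gap uses $2$ — cannot sum to $n$ when $n\equiv 3\pmod 4$ with only $\lceil n/4\rceil$ vertices in $W$ unless some gap has length $\ge 3$, and a length-$\ge3$ gap contains an adjacent pair of rim vertices with identical representation $\{2^{|W|}\}$ or $\{1,2^{|W|-1}\}$ (by the computation in Lemma~\ref{1or3}(2), which still applies to that adjacent pair). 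Getting the bookkeeping airtight across the $v_p\in W$ / $v_p\notin W$ split, and the small cases $n=3,4,6$ where these asymptotics break down and must be handled by hand, is where the care is needed; everything else is distance arithmetic on $C_n$.
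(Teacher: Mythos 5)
Your overall architecture (lower bounds from Theorem~\ref{ldimWn} and Observation~\ref{ineq}, structural constraints from Lemmas~\ref{1or3} and~\ref{out1or3}, parity obstruction for odd $n$, explicit rim patterns for the upper bounds) matches the paper's strategy, but your construction is inverted, and this breaks the upper bounds. You propose the repeating rim pattern ``three consecutive vertices \emph{in} $W$, then one gap,'' so that $W'_n[W]$ is a union of $P_3$'s and $W'_n[W_n\setminus W]$ a union of $P_1$'s. That set has cardinality roughly $3n/4$, not $\lceil n/4\rceil$, so it cannot witness $lmd(W_n)\le\lceil n/4\rceil$. Lemma~\ref{1or3} permits paths of order $1$ or $3$ on \emph{both} sides, and to minimize $|W|$ you must take the opposite assignment: isolated vertices ($P_1$'s) \emph{in} $W$ separated by $P_3$'s \emph{outside} $W$, i.e., the pattern ``one in, three out'' repeated, which uses exactly $n/4$ vertices when $4\mid n$. (One then checks: a non-$W$ rim vertex has representation $\{1^a,2^{k-a}\}$ where $a\in\{0,1\}$ is its number of rim neighbours in $W$, and in each out-block of three the counts are $1,0,1$, so adjacent pairs differ; $W$-vertices carry a $0$; the hub gets $\{1^k\}$.) The same inversion propagates into your odd-case analysis: ``$\lceil n/4\rceil$ $P_3$-blocks'' in $W$ is $3\lceil n/4\rceil$ vertices, and your length budget ``a $P_3$ block plus one gap uses $4$ positions'' (contributing $3$ to $|W|$) or ``a lone $W$-vertex plus gap uses $2$'' forces $|W|\ge n/2$, so the bookkeeping as written can never reach the target $\lceil n/4\rceil$ and the lower-bound argument for $n\equiv 3\pmod 4$ does not close.

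A secondary, fixable inaccuracy: for $lmd(W_n)=\infty$ when $n$ is odd, the forced structure is not a period-$4$ pattern. Lemma~\ref{1or3} allows consecutive (in-$W$, out-of-$W$) block pairs of total sizes $1+1$, $1+3$, $3+1$, or $3+3$; the correct conclusion is that $n$ is a sum of even numbers, hence even, which is the paper's parity argument. Your stronger claim that ``each gap must be a single vertex because two consecutive gaps would be a $P_2$'' ignores that a gap of three is also legal. The parity conclusion survives, but the count $\lceil n/4\rceil$ specifically requires selecting the $1$-in/$3$-out option, which is exactly the choice your construction gets backwards.
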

\begin{proof}
For the first part of the theorem, let $W$ be a local multiset resolving set of $W_n$. By Lemma \ref{1or3}, a path in $W'_n[W]$ and a path in $W'_n[W_n \setminus W]$ must come in pairs. Since each is of odd order, the pair of these paths is of even order. And so, every $W_n$ with odd $n$ has no local multiset resolving set. 

For $n=4$, by Observation \ref{ineq} and Theorem \ref{ldimWn}, $lmd(W_4) \geq 2$. Suppose that $W=\{u,v\}$. If $d(u,v)=1$, $m(u|W)=m(v|W)=\{0,1\}$, and if $d(u,v)=2$, there exists a vertex $w$ with $d(u,w)=d(v,w)=1$ such that $m(w|W)=m(v_p|W)=\{1^2\}$. Now, choose $W=\{u,v,w\} \subset W'_4$. It is easy to see that $W$ is a local multiset resolving set for $W_4$.

For $n=6$, by Observation \ref{ineq} and Theorem \ref{ldimWn}, $lmd(W_6) \geq 2$. Suppose that $W=\{u,v\}$. If $d(u,v)=1$, $m(u|W)=m(v|W)=\{0,1\}$, if $d(u,v)=2$, there exists a vertex $w$ with $d(u,w)=d(v,w)=1$ such that $m(w|W)=m(v_p|W)=\{1^2\}$, and $d(u,v)=3$ is impossible due to Lemma \ref{1or3}. Now, choose $W=\{u,v,w\}$ with $d(u,v)=d(u,w)=d(v,w)=2$. It is easy to see that $W$ is a local multiset resolving set for $W_6$.

For even $n \geq 8$, by Observation \ref{ineq} and Theorem \ref{ldimWn}, $lmd(W_n) \geq \lceil\frac{n}{4}\rceil$. Consider $P_{n_1}, P_{m_1}, P_{n_2}, P_{m_2}, \ldots, P_{n_k}, P_{m_k}$ are paths in $W'_n$, where $P_{n_i} \subseteq W'_n[W]$ and $P_{m_i}\subseteq W'_n[W_n \setminus W], 1 \leq i \leq k$. By Lemma \ref{1or3}, to minimize the cardinality of $W$, we should set $n_i=1$ dan $m_i=3, 1 \leq i \leq k$ (for $n \equiv 0 \mod 4$) and $n_i=1, 1 \leq i \leq k, m_i=3, 1 \leq i \leq k-1$, and $m_k=1$ (for $n \equiv 2 \mod 4$). Thus, $lmd(W_m) = \lceil\frac{m}{4}\rceil$.

For the second part of the theorem, when $n=3$, $W_3 \cong K_4$, and so, by Theorem \ref{dmsn-1}, $ldim_{ms}(W_3)=ldim_{ms}=(K_4)=3$. For $n \geq 4$ even, the local outer multiset dimension of $W_n$ follows from Observation \ref{ineq}, Theorem \ref{ldimWn}, and its local multiset dimension from the first part of this theorem.

For $n \geq 5$ odd, 
consider $P_{n_1}, P_{m_1}, P_{n_2}, P_{m_2}, \ldots, P_{n_k}, P_{m_k}$ are paths in $W'_n$, where $P_{n_i} \subseteq W'_n[W]$ and $P_{m_i}\subseteq W'_n[W_n \setminus W], 1 \leq i \leq k$. 

For $n \equiv 1 \mod 4$, by Lemma \ref{out1or3}, to minimize the cardinality of $W$, we should set $n_i=1$ dan $m_i=3, 1 \leq i \leq k$. This process covers $n-1 \equiv 0 \mod 4$ vertices, and the last vertex should be located in $W$; otherwise, there exists a path $P\subseteq W'_n[W_n \setminus W]$ of order $4$, which is impossible by Lemma \ref{out1or3}. Therefore, $lmd(W_m) = \lceil\frac{m}{4}\rceil + 1$. 

For $n \equiv 3 \mod 4$, by Lemma \ref{out1or3}, to minimize the cardinality of $W$, we should set $n_i=1, 1 \leq i \leq k, m_i=3, 1 \leq i \leq k-1$, and $m_k=1$. Again, this process covers $n-1 \equiv 2 \mod 4$ vertices, and the last vertex should be located in $W$; otherwise, there exists a path $P\subseteq W'_n[W_n \setminus W]$ of order $2$, which is impossible according to Lemma \ref{out1or3}. Therefore, $lmd(W_m) = \lceil\frac{m}{4}\rceil + 1$.
\end{proof}

\vspace{3mm}
The problem of determining the local (outer) multiset dimensions of graphs with diameter 2 is still widely open, especially since it is well known that almost all graphs have diameter 2. One possible direction to partially solve the problem is by considering the graph joins, which have diameters of at most 2.\\

\begin{problem}
Let $G$ and $H$ be two graphs.
\begin{enumerate}
    \item Determine $lmd(G+H)$ as a function of $lmd(G)$ and $lmd(H)$.
    \item Determine $ldim_{ms}(G+H)$ as a function of $ldim_{ms}(G)$ and $ldim_{ms}(H)$.
\end{enumerate}
    
\end{problem}

\section*{Acknowledgements}

This research is supported by Penelitian Kerjasama Dalam Negeri 2023, funded by the Indonesian Ministry of Education, Culture, Research, and Technology.

\bibliography{sn-bibliography}

\end{document}